\numberwithin{equation}{section}
                        \theoremstyle{plain}
\newcommand\no[1]{}
\newtheorem{theorem}{Theorem}[section]
\newtheorem{thm}{Theorem}
\newtheorem{lemma}[theorem]{Lemma}
\newtheorem{proposition}[theorem]{Proposition}
\theoremstyle{definition}
\newtheorem{remark}[theorem]{Remark}
\def\BC{\mathbb C}
\def\BZ{\mathbb Z}
\def\BR{\mathbb R}
\def\BT{\mathbb T}
\def\fb{\mathfrak b}
\def\la{\langle}
\def\ra{\rangle}
\DeclareMathOperator{\tr}{\mathrm tr}
\def\ve{\varepsilon}
\def\be { \begin{equation} }
\def\ee { \end{equation} }
\begin{document}

\title[Volumes of double twist knot cone-manifolds]
{Volumes of double twist knot cone-manifolds}

\author{Anh T. Tran}
\address{Department of Mathematical Sciences, The University of Texas at Dallas, Richardson, TX 75080, USA}
\email{att140830@utdallas.edu}

\thanks{2010 \textit{Mathematics Subject Classification}.\/ 57M27.}
\thanks{{\it Key words and phrases.\/}
cone-manifold, double twist knot, nonabelian representation, Riley polynomial, two-bridge knot, volume.}

\begin{abstract}
We give explicit formulae for the volumes of hyperbolic cone-manifolds of double twist knots, a class of two-bridge knots which includes twist knots and two-bridge knots with Conway notation $C(2n,3)$. We also study the Riley polynomial of a class of one-relator groups which includes two-bridge knot groups.
\end{abstract}

\maketitle

\section{Introduction}

Let $K$ be a hyperbolic knot and $X_K$ the complement of $K$ in $S^3$. Let $\rho_{\text{hol}}$ be a holonomy representation of $\pi_1(X_K)$ into $PSL_2(\BC)$. Thurston \cite{Th} showed that $\rho_{\text{hol}}$ can be deformed into a one-parameter family $\{\rho_{\alpha}\}_\alpha$ of representations to give a corresponding one-parameter family $\{C_{\alpha}\}_\alpha$ of singular complete hyperbolic manifolds. These $\alpha$'s and $C_{\alpha}$'s are called the cone-angles and  hyperbolic cone-manifolds of $K$, respectively. 

We consider the complete hyperbolic structure on a knot complement as the cone-manifold structure of cone-angle zero. It is known that for a two-bridge knot $K$ there exists an angle $\alpha_K \in [\frac{2\pi}{3},\pi)$ such that $C_\alpha$ is hyperbolic for $\alpha \in (0, \alpha_K)$, Euclidean for $\alpha =\alpha_K$, and spherical for $\alpha \in (\alpha_K, \pi)$ \cite{HLM, Ko, Po, PW}. In \cite{HLM} a method for calculating the volumes of two-bridge knot cone-manifolds were introduced but without explicit formulae. Explicit volume formulae for hyperbolic cone-manifolds of knots are known for the following knots: $4_1$ \cite{HLM, Ko, Ko2, MR}, $5_2$ \cite{Me},  twist knots \cite{HMP} and two-bridge knots with Conway notation $C(2n,3)$ \cite{HL}. In this paper we will calculate the volumes of cone-manifolds of double twist knots, a class of two-bridge knots which includes twist knots and two-bridge knots with Conway notation $C(2n,3)$.

Let $J(k, l)$ be the knot/link as in Figure 1, where $k,l$ denote 
the numbers of half twists in the boxes. Positive (resp. negative) numbers correspond 
to right-handed (resp. left-handed) twists. Note that $J(k, l)$ is a knot if and only if $kl$
is even, and is the trivial knot if $kl = 0$. Furthermore, $J(k, l) \cong J(l, k)$ and $J(-k, -l)$ is
the mirror image of $J(k, l)$. Hence, without loss of generality, we will only consider $J(k, 2n)$
for $k > 0$ and $|n| > 0$. The knot $J(2,2n)$ is known as a twist knot, and $J(3,2n)$ is the two-bridge knot with Conway notation $C(-2n,3)$. In general, the knot $J(k,2n)$ is called a double twist knot. It is a hyperbolic knot if and only if $|k|, |2n| \ge 2$ and $J(k,2n)$ is not the trefoil knot. We will now exclusively consider the hyperbolic $J(k,2n)$ knots.

\begin{figure}[th]
\centerline{\psfig{file=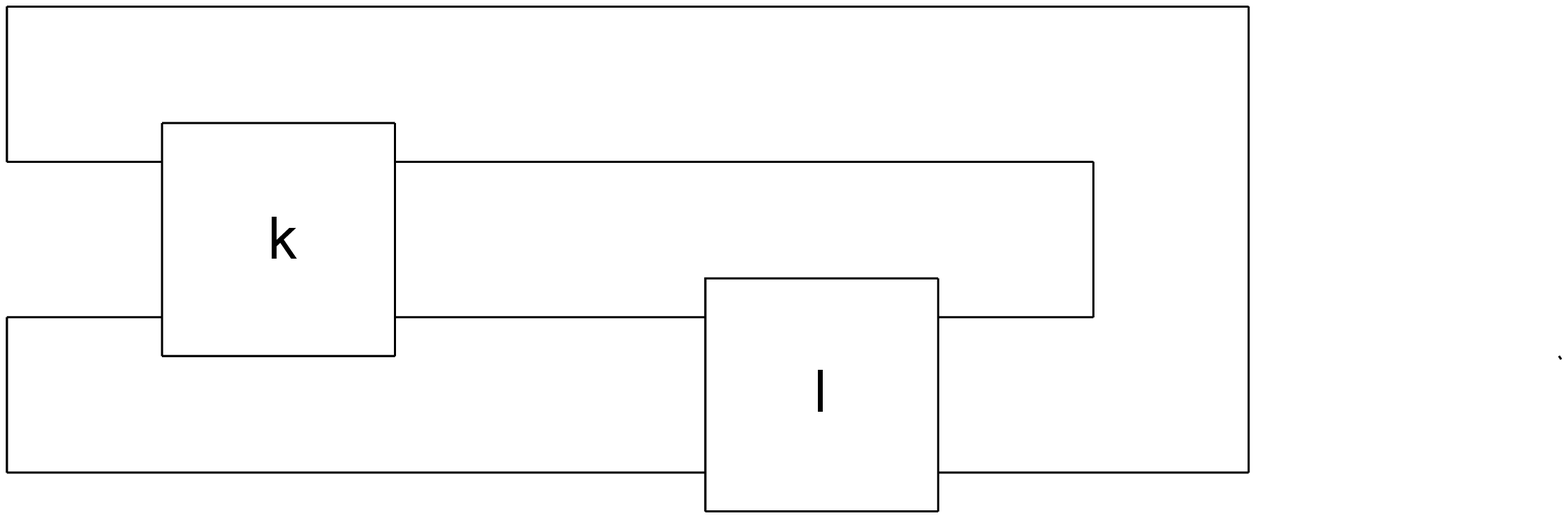,width=3.5in}}
\vspace*{8pt}
\caption{The knot/link $J(k,l)$. }
\end{figure} 

To state our main results we introduce the Chebychev polynomials of the second kind $S_j(\omega)$. They are recursively defined by $S_0(\omega)=1$, $S_1(\omega)=\omega$ and $S_{j}(\omega) = \omega S_{j-1}(\omega) - S_{j-2}(\omega)$ for all integers $j$.

Let $X_{K}(\alpha)$ be the hyperbolic cone-manifold with underlying space $S^3$ and with singular set $K$ of cone-angle $\alpha$. The volume of $X_{J(k,2n)}(\alpha)$ is given as follows.

\begin{thm} \label{odd}
We have $$\emph{Vol}(X_{J(2m+1,2n)}(\alpha)) = \int_{\alpha}^{\pi} \log \left| \frac{S_m(z) - M^2 S_{m-1}(z)}{M^2 S_m(z) - S_{m-1}(z)} \right| d\omega.$$
Here $M = e^{i\frac{\omega}{2}}$ and $z$, with $\emph{Im}\Big( S_m(z) \overline{S_{m-1}(z)} \Big) \le 0$, is a zero of the Riley polynomial
$$\Phi_{J(2m+1,2n)}(M,z)= S_{n}(t_m)- d_m S_{n-1}(t_m),$$
where 
\begin{eqnarray*}
t_m &=& M^2+M^{-2}+2-z-(z-2)(z-M^2-M^{-2})S_m(z)S_{m-1}(z),\\
d_m &=& 1 - (z-M^2-M^{-2})S_{m}(z) \left( S_{m}(z) - S_{m-1}(z) \right).
\end{eqnarray*}
\end{thm}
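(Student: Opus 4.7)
My plan is the standard Schl\"afli-formula approach for cone-manifold volumes: since
$$\frac{d}{d\omega}\text{Vol}(X_K(\omega)) = -\frac{1}{2}\,\text{length}(K_\omega) = -\log|L(\omega)|,$$
where $L(\omega)$ is the holonomy eigenvalue of the preferred longitude along the deforming family of representations, the formula will follow by identifying $L$ explicitly and integrating from $\omega = \alpha$ to $\omega = \pi$, where $\text{Vol}(X_K(\pi)) = 0$ by convention (via analytic continuation through the Euclidean/spherical range).

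The setup is as follows. As a two-bridge knot, $\pi_1(X_{J(2m+1,2n)})$ has a presentation $\la a,b \mid aw = wb\ra$ with $a,b$ meridians and $w$ a specific word in $a,b$ determined by the continued fraction $[2m+1,2n]$. Any nonabelian representation $\rho: \pi_1(X_K) \to SL_2(\BC)$ with $\tr\rho(a) = M + M^{-1}$ (and $M = e^{i\omega/2}$ for cone angle $\omega$) is conjugate to
$$\rho(a) = \begin{pmatrix} M & 1 \\ 0 & M^{-1}\end{pmatrix}, \qquad \rho(b) = \begin{pmatrix} M & 0 \\ 2-z & M^{-1}\end{pmatrix}, \qquad z = \tr \rho(ab).$$
The Riley polynomial is the single nontrivial entry of $\rho(aw)-\rho(wb)$ viewed as a polynomial in $(M,z)$, and its vanishing cuts out the nonabelian representations.

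To reach the explicit form claimed, I would apply the Cayley--Hamilton identity $A^j = S_{j-1}(\tr A)\,A - S_{j-2}(\tr A)\,I$ for $A \in SL_2(\BC)$ in two stages: first to the inner block of $2m+1$ half-twists (producing the polynomials $S_m(z)$ and $S_{m-1}(z)$), and then to the outer block of $2n$ half-twists wrapped around it (producing $S_n(t_m)$ and $S_{n-1}(t_m)$, where $t_m$ is the trace of the inner block). Repeated simplification via the Chebyshev identity $S_j^2 - S_{j+1}S_{j-1} = 1$ should collapse the relator equation to $S_n(t_m) - d_m S_{n-1}(t_m) = 0$ with $t_m,d_m$ exactly as stated. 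The same block decomposition computes $\rho(\lambda)$ for the canonical longitude $\lambda$: since $\lambda$ commutes with $a$, the matrix $\rho(\lambda)$ is upper triangular in the given basis, and its eigenvalue works out to
$$L(M,z) = \frac{S_m(z) - M^2 S_{m-1}(z)}{M^2 S_m(z) - S_{m-1}(z)}$$
up to sign. The branch condition $\text{Im}(S_m(z)\ov{S_{m-1}(z)}) \le 0$ singles out the root of the Riley polynomial whose representation is a deformation of the discrete faithful hyperbolic holonomy, and substituting into the Schl\"afli integral then gives the theorem.

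The main obstacle will be the algebraic reduction in the Riley polynomial step: two interleaved Chebyshev recursions must be composed, and the intermediate expressions manipulated through repeated applications of $S_j^2 - S_{j+1}S_{j-1} = 1$ and its shift companions until they fit the particular compact shape of $t_m$ and $d_m$ quoted in the statement. Getting the longitude eigenvalue to simplify in lockstep, so that the same $S_m(z), S_{m-1}(z)$ appear in both $L(M,z)$ and $d_m$, is the delicate bookkeeping part. Once $L(M,z)$ is identified, the passage to the integral is a direct invocation of the Schl\"afli-integral method of Hilden--Lozano--Montesinos used in \cite{HMP, HL}.
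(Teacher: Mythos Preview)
Your plan matches the paper's approach: compute the Riley polynomial via two nested Chebyshev recursions (on the inner $(2m+1)$-block and the outer $n$-th power of $w$), read off the longitude eigenvalue from the same matrix data, and feed $\log|L|$ into the Schl\"afli integral. Two points in your sketch are genuine gaps rather than mere bookkeeping.

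First, the upper limit $\pi$ is not obtained ``by convention'' or analytic continuation. The Schl\"afli formula only integrates up to the Euclidean angle $\alpha_K$. The paper extends to $\pi$ by invoking \cite[Prop.~6.4]{PW}: for $\omega \in (\alpha_K,\pi]$ all characters on the relevant component are real, so $z \in \BR$, hence $S_m(z)\overline{S_{m-1}(z)} \in \BR$ and $|L(\omega)|=1$, making $\log|L|$ vanish on $(\alpha_K,\pi]$. This is a substantive geometric input you should name explicitly. Second, your account of the branch condition is not quite the mechanism used. The condition $\mathrm{Im}\big(S_m(z)\overline{S_{m-1}(z)}\big) \le 0$ is not introduced to pick out the geometric root directly; rather, one normalises by choosing $|L|\ge 1$ so that $\ell_\omega = 2\log|L| \ge 0$, and the elementary identity
\[
|z_1 - e^{i\omega}z_2|^2 - |z_1 - e^{-i\omega}z_2|^2 = -4\,\mathrm{Im}(z_1\overline{z_2})\sin\omega
\]
with $z_1=S_m(z)$, $z_2=S_{m-1}(z)$ translates $|L|\ge 1$ into the stated imaginary-part inequality.

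Minor corrections: in this normalisation $z = \tr\rho(ab^{-1})$, not $\tr\rho(ab)$; the relator is $w^n a = b\,w^n$ (so the outer Chebyshev recursion comes from powering $w$, not from a block wrapping $w$); and the actual longitude eigenvalue carries an extra factor $-M^{-4n}$, harmless for $|L|$ since $|M|=1$.
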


\begin{thm} \label{even}
We have $$\emph{Vol}(X_{J(2m,2n)}(\alpha)) = \int_{\alpha}^{\pi} \log \left| \frac{(S_m(z) - S_{m-1}(z)) -  M^2(S_{m-1}(z) - S_{m-2}(z))}{M^2(S_m(z) - S_{m-1}(z)) - (S_{m-1}(z) - S_{m-2}(z))} \right| d\omega.$$
Here $M = e^{i\frac{\omega}{2}}$ and $z$, with $\emph{Im} \Big( \big( S_m(z) - S_{m-1}(z) \big) \overline{S_{m-1}(z) - S_{m-2}(z)} \Big) \le 0$,  is a zero of the Riley polynomial
$$\Phi_{J(2m,2n)}(M,z)= S_{n}(\bar{t}_m)- \bar{d}_m S_{n-1}(\bar{t}_m)=0,$$
where
\begin{eqnarray*}
\bar{t}_m &=& 2+(z-2)(z-M^2-M^{-2})S^2_{m-1}(z),\\
\bar{d}_m &=& 1+(z-M^2-M^{-2})S_{m-1}(z) \left( S_{m}(z) - S_{m-1}(z) \right).
\end{eqnarray*}
\end{thm}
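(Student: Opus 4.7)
The plan is to apply the Schl\"afli formula to a one-parameter family of nonabelian $SL_2(\BC)$-representations of $\pi_1(X_{J(2m,2n)})$ parameterized by the meridian eigenvalue $M = e^{i\omega/2}$. This reduces the theorem to three steps: (i) deriving the Riley polynomial $\Phi_{J(2m,2n)}(M,z)$, (ii) computing the longitude eigenvalue $L(M,z)$ at a zero $z$ of this polynomial, and (iii) integrating the Schl\"afli identity from $\alpha$ up to $\pi$ using the boundary value $\mathrm{Vol}(X_{J(2m,2n)}(\pi)) = 0$.

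For (i) and (ii) I would start from the standard two-bridge presentation $\pi_1(X_{J(2m,2n)}) = \la a, b \mid wa = bw \ra$, with $w$ read off from Figure 1, and from Riley's normalization
$$\rho(a) = \begin{pmatrix} M & 1 \\ 0 & M^{-1}\end{pmatrix}, \qquad \rho(b) = \begin{pmatrix} M & 0 \\ -z & M^{-1}\end{pmatrix}.$$
The word $w$ decomposes into two twist blocks of lengths $2m$ and $2n$, and the $k$-th power of either block can be expanded via Cayley--Hamilton as $X^k = S_{k-1}(\tr X)\, X - S_{k-2}(\tr X)\, I$. Applying this first to the inner length-$2m$ block produces a matrix whose trace and relevant entries are precisely $\bar t_m$ and $\bar d_m$; a second application to the outer length-$2n$ block then collapses the relation $wa = bw$ to $S_n(\bar t_m) - \bar d_m S_{n-1}(\bar t_m) = 0$, as claimed. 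The same matrix bookkeeping, applied to the canonical longitude $\lambda$ (which commutes with $\rho(a)$ and is therefore upper triangular in this basis), produces
$$L = \frac{(S_m(z) - S_{m-1}(z)) - M^2(S_{m-1}(z) - S_{m-2}(z))}{M^2(S_m(z) - S_{m-1}(z)) - (S_{m-1}(z) - S_{m-2}(z))}$$
as the $(1,1)$-entry of $\rho(\lambda)$, matching the integrand of the theorem.

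Step (iii) invokes the Schl\"afli formula $\frac{d\,\mathrm{Vol}(X_K(\alpha))}{d\alpha} = -\frac{1}{2}\ell(\alpha)$, combined with the identity $\ell(\alpha) = 2 \log|L(\alpha)|$ for the length of the singular geodesic, and integration from $\alpha$ to $\pi$; the boundary value $\mathrm{Vol}(X_{J(2m,2n)}(\pi)) = 0$ is standard for two-bridge cone-manifolds (cf.\ \cite{HLM, Po, PW}). The sign condition $\mathrm{Im}\bigl((S_m(z) - S_{m-1}(z))\overline{(S_{m-1}(z) - S_{m-2}(z))}\bigr) \le 0$ is imposed precisely to select the root $z = z(\omega)$ on the component of the character variety that deforms continuously to the complete hyperbolic holonomy as $\omega \to 0^+$, ensuring that $|L|$ is the exponential of half the singular length rather than its reciprocal.

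The main obstacle is the double-Chebyshev bookkeeping in (i)--(ii): because $w$ contains two nested twist regions, both $\Phi_{J(2m,2n)}(M,z)$ and $L(M,z)$ must be simplified through two successive Cayley--Hamilton expansions, and it is not a priori obvious that the inner expansion collapses exactly into the pair $(\bar t_m, \bar d_m)$ of the statement. Once this algebraic identity is established, the Schl\"afli integration follows the template already used in \cite{HMP, HL} for the one-parameter subfamilies of twist knots and $C(-2n,3)$-knots.
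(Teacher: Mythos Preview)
Your outline follows the paper's route exactly: compute the Riley polynomial and the longitude eigenvalue $L$ via nested Chebyshev/Cayley--Hamilton expansions (the paper's Propositions~\ref{wbar}, \ref{Rbar}, \ref{L}), then integrate the Schl\"afli formula. But step~(iii) has a gap in the justification of the upper limit $\pi$. The Schl\"afli identity and $\ell_\omega=2\log|L|$ are valid only on the hyperbolic range $(0,\alpha_K)$; for $\omega>\alpha_K$ the structure is spherical, so ``$\mathrm{Vol}(X_K(\pi))=0$'' is not a meaningful hyperbolic boundary value, and the references you cite do not assert it. The paper instead integrates only up to $\alpha_K$ (where the hyperbolic volume collapses as the structure becomes Euclidean), obtaining $\mathrm{Vol}(\alpha)=\int_\alpha^{\alpha_K}\log|L|\,d\omega$, and then extends the upper limit to $\pi$ by a separate argument: for $\alpha_K<\omega\le\pi$ all characters are real by \cite[Prop.~6.4]{PW}, hence $z\in\BR$, hence the numerator and denominator of $L$ are complex conjugates of one another and $|L|=1$, so $\log|L|=0$ on $(\alpha_K,\pi]$.

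Your interpretation of the imaginary-part condition is also heavier than what is actually used. The paper simply observes (Lemma~\ref{LL}) that for $M=e^{i\omega/2}$ the inequality $\mathrm{Im}\bigl((S_m-S_{m-1})\overline{(S_{m-1}-S_{m-2})}\bigr)\le 0$ is \emph{equivalent} to $|L|\ge 1$; since the geodesic length is $\ell_\omega=2\bigl|\log|L|\bigr|$ in any case, the condition is merely the bookkeeping convention that renders $\log|L|$ nonnegative, not a selection of the geometric component of the character variety. Finally, a minor slip: with your normalization $\rho(b)=\left(\begin{smallmatrix}M&0\\-z&M^{-1}\end{smallmatrix}\right)$ you will not land on the stated $\bar t_m,\bar d_m$; the paper takes the lower-left entry to be $2-z$, where $z=\tr\rho(ab^{-1})$.
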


\begin{remark}
For a fixed integer $m$, the Riley polynomial $P_n:=\Phi_{J(2m+1,2n)}(M,z)$ can be defined recursively by
$$P_n = \Big( M^2+M^{-2}+2-z-(z-2)(z-M^2-M^{-2})S_m(z)S_{m-1}(z) \Big) P_{n-1} - P_{n-2}$$
for all integers $n$, with initial conditions $P_0=1$ and $$P_1=1+(z-M^2-M^{-2})S_{m-1}(z) \big( S_m(z) - S_{m-1}(z)\big).$$

Similarly, the Riley polynomial $Q_n:=\Phi_{J(2m,2n)}(M,z)$ can be defined recursively by
$$Q_n = \Big( 2+(z-2)(z-M^2-M^{-2})S^2_{m-1}(z) \Big) Q_{n-1} - Q_{n-2}$$
for all integers $n$, with initial conditions $Q_0=1$ and $$Q_1=1 - (z-M^2-M^{-2})S_{m-1}(z) \left( S_{m-1}(z) - S_{m-2}(z) \right).$$
\end{remark}

With appropriate changes of variables, we obtain the volume formulae for cone-manifolds of two-bridge knots with Conway notation $C(2n,3)$ in \cite{HL} and for cone-manifolds of twist knots in \cite{HMP} by setting $m=1$ in Theorems \ref{odd} and \ref{even}, respectively.

The paper is organized as follows. In Section \ref{Riley} we study the Riley polynomial of a class of one-relator groups which includes two-bridge knot groups. In Section \ref{pf} we apply the result in Section \ref{Riley} to calculate the Riley polynomial of the double twist knot $J(k,2n)$ and then we prove Theorems \ref{odd} and \ref{even}. 

\subsection{Acknowlegement} This work was partially supported by a grant from the Simons Foundation (\#354595 to Anh Tran).

\section{Nonabelian representations} \label{Riley}

In this section we will study nonabelian $SL_2(\BC)$-representations of a class of one-relator groups which includes two-bridge knot groups. Like the case of two-bridge knot groups, the set of nonabelian $SL_2(\BC)$-representations of each group in this class is described by a single polynomial in two variables which we call the Riley polynomial of the group. We will present three different approaches to the Riley polynomial. 

Let $F_{a,b}=\la a,b \ra$ be the free group on two letters $a$ and $b$. For a word $u \in F_{a,b}$ let $\tilde{u}$ be the word obtained from $u$ by replacing $a$ and $b$ by $b^{-1}$ and $a^{-1}$ respectively.

We consider the group 
\begin{equation} \label{grp}
G=\la a,b \mid wa = b w \ra,
\end{equation} 
where $w$ is a word in $F_{a,b}$ with $w \not= 1$ and $\tilde{w}=w^{-1}$. 

The knot group of a two-bridge knot always has a presentation of the form \eqref{grp}. Indeed, two-bridge knots are those knots admitting a projection with only two maxima and two minima. The double branched cover of $S^3$ along a two-bridge knot is a lens space $L(p,q)$, which is obtained by doing a $p/q$ surgery on the unknot. Such a two-bridge knot is denoted by $\fb(p,q)$. Here $p$ and $q$ are relatively prime odd integers, and we can always assume that $p > |q| \ge 1$. It is known that $\fb(p',q')$ is ambient isotopic to $\fb(p,q)$ if and only if $p'=p$ and $q' \equiv q^{\pm 1} \pmod{p}$, see e.g. \cite{BZ}. The knot group of the two-bridge knot $\fb(p,q)$ has a presentation of the form $\la a, b \mid wa = b w \ra$ where $a,b$ are meridians, $w=a^{\ve_1} b^{\ve_2} \cdots a^{\ve_{p-2}}b^{\ve_{p-1}}$ and $\ve_i=(-1)^{\lfloor iq/p \rfloor}$ for $1 \le i \le p-1$. Since $\ve_i = \ve_{p-i}$, we have $\tilde{w}=b^{-\ve_1} a^{-\ve_2} \cdots b^{-\ve_{p-2}}a^{-\ve_{p-1}}=b^{-\ve_{p-1}} a^{-\ve_{p-2}} \cdots b^{-\ve_{2}}a^{-\ve_{1}}=w^{-1}$.

We now consider representations of $G$ into $SL_2(\BC)$. Two representations $\rho, \rho': G \to SL_2(\BC)$ are called conjugate if there exists a matrix $C \in SL_2(\BC)$ such that $\rho'(g) = C \rho(g) C^{-1}$ for all $g \in G$. In this paper we study nonabelian representations up to conjugation. A representation $\rho: G \to SL_2(\BC)$ is called nonabelian if the image $\rho(G)$ is a nonabelian subgroup of $SL_2(\BC)$.  Suppose $\rho$ is nonabelian. Since $a$ and $b=waw^{-1}$ are conjugate, up to conjugation we can assume that 
\begin{equation} \label{conj}
\rho(a)= \left[ \begin{array}{cc}
M & 1 \\
0 & M^{-1} \end{array} \right] \quad \text{and} \quad \rho(b) = \left[ \begin{array}{cc}
M & 0 \\
r & M^{-1} \end{array} \right],
\end{equation}
where $(M,r) \in \BC^* \times \BC^*$ satisfies the matrix equation $\rho(w)\rho(a)=\rho(b)\rho(w)$. 

We will show that this matrix equation is equivalent to a single equation in $M$ and $y$, which we call the Riley polynomial of the group $G$.

For a word $u$ in $F_{a,b}$, we write $\rho(u)= \left[ \begin{array}{cc}
u_{11} & u_{12} \\
u_{21} & u_{22} \end{array} \right]$ where $u_{ij} \in \BC$.

\subsection{The Riley polynomial} To solve the matrix equation $\rho(w)\rho(a)=\rho(b)\rho(w)$, we follow Riley's approach in \cite{Ri}.

\begin{lemma} \label{2112}
Suppose $u$ is a word in $F_{a,b}$ with $u \not= 1$ and $\tilde{u}=u^{-1}$. Then we have
\begin{equation} \label{ur}
u_{21} = r u_{12}.
\end{equation}
\end{lemma}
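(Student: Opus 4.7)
My approach is to realize the map $u \mapsto \widetilde{u}$ as conjugation by an explicit matrix at the level of the representation $\rho$. Consider the involution $\sigma: F_{a,b} \to F_{a,b}$ defined by $\sigma(a) = b^{-1}$ and $\sigma(b) = a^{-1}$. A direct comparison shows that $\sigma(u) = \widetilde{u}$ for every word $u \in F_{a,b}$, and $\sigma$ is an anti-involution in the sense that $\sigma(u^{-1}) = \sigma(u)^{-1}$ trivially while the substitution reverses letter-conjugacy class in the expected way. The hypothesis $\widetilde{u} = u^{-1}$ therefore says $\rho(\sigma(u)) = \rho(u)^{-1}$.

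The core step is to produce an explicit $P \in SL_2(\BC)$ satisfying
\[
P\,\rho(a)\,P^{-1} = \rho(b^{-1}) \quad \text{and} \quad P\,\rho(b)\,P^{-1} = \rho(a^{-1}).
\]
Searching among antidiagonal matrices of the form $P = \left[\begin{smallmatrix} 0 & x \\ -x^{-1} & 0 \end{smallmatrix}\right]$, a short computation shows that both conditions reduce to $x^{2} = r^{-1}$; hence I take $P = \left[\begin{smallmatrix} 0 & r^{-1/2} \\ -r^{1/2} & 0 \end{smallmatrix}\right] \in SL_2(\BC)$ (any square root of $r$ will do; note $r \neq 0$ by the assumption that $\rho$ is nonabelian). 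Because $P(\cdot)P^{-1}$ is a group homomorphism agreeing with $\rho \circ \sigma$ on the generators, it follows by induction on word length that
\[
P\,\rho(u)\,P^{-1} = \rho(\widetilde{u}) \qquad \text{for every } u \in F_{a,b}.
\]

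To conclude, I combine the last identity with the hypothesis $\widetilde{u} = u^{-1}$ to obtain $P\,\rho(u)\,P^{-1} = \rho(u)^{-1}$. Writing $\rho(u) = \left[\begin{smallmatrix} u_{11} & u_{12} \\ u_{21} & u_{22}\end{smallmatrix}\right]$ with $\det \rho(u) = 1$, both sides can be computed explicitly: the left-hand side equals $\left[\begin{smallmatrix} u_{22} & -u_{21}/r \\ -r u_{12} & u_{11}\end{smallmatrix}\right]$, while the right-hand side equals $\left[\begin{smallmatrix} u_{22} & -u_{12} \\ -u_{21} & u_{11}\end{smallmatrix}\right]$. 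Matching the off-diagonal entries yields the desired identity $u_{21} = r\, u_{12}$.

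The main obstacle is guessing (or motivating) the conjugating matrix $P$; once its form is pinned down by requiring it to swap $\rho(a)$ with $\rho(b^{-1})$, everything else is a routine $2\times 2$ calculation. As a sanity check, an alternative purely inductive proof (on the length of $u$, using the submonoid of words closed under $\sigma$) gives the same conclusion, but the conjugation argument is cleaner and foreshadows the symmetry that Riley's approach will exploit in the subsequent sections.
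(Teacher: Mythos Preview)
Your proof is correct, but the route differs from the paper's own proof of this lemma. The paper argues by induction on the word length of $u$: writing $u = gvh$ with $g,h \in \{a^{\pm 1}, b^{\pm 1}\}$, the condition $\tilde u = u^{-1}$ forces $\{g,h\}$ to be either $\{a,b\}$ or $\{a^{-1},b^{-1}\}$ and $\tilde v = v^{-1}$; one then multiplies out $\rho(g)\rho(v)\rho(h)$ using the inductive hypothesis $v_{21} = r v_{12}$ and checks $u_{21} = r u_{12}$ in each of the four cases. Your argument instead realizes $u \mapsto \tilde u$ as conjugation by an explicit antidiagonal matrix $P$ and reads off the relation from $P\rho(u)P^{-1} = \rho(u)^{-1}$.

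What you have discovered is essentially the symmetry the paper introduces later, in its ``Mednykh's approach'' subsection: your $P$ equals $-C$ for the matrix $C = \left[\begin{smallmatrix} 0 & -r^{-1/2} \\ r^{1/2} & 0 \end{smallmatrix}\right]$ defined there, and your identity $P\rho(u)P^{-1} = \rho(\tilde u)$ is exactly the content of that subsection's Proposition~\ref{CC}. So you have, in effect, pulled the paper's third viewpoint forward to give a cleaner proof of Lemma~\ref{2112} than the paper's own induction. The inductive proof is more pedestrian but self-contained; your conjugation argument is shorter, conceptually exposes the underlying $\BZ/2$-symmetry of the representation, and dovetails with what comes later.
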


\begin{proof}
We use induction on the length $\ell(u)$ of $u$. Note that $\ell(u) \ge 2$. If $\ell(u)=2$ then it is easy to see that $u=g_1^{\ve}g_2^{\ve}$, where $\{g_1,g_2\}=\{a,b\}$ and $\ve = \pm 1$. Equality \eqref{ur} holds true by a direct calculation.

Suppose $\ell(u) > 2$. Write $u=gvh$, where $g,h \in \{a^{\pm 1},b^{\pm 1}\}$ and $v \in F_{a,b}$ with $\ell(v)=\ell(r)-2$. Since $\tilde{u} = u^{-1}=h^{-1}v^{-1}g^{-1}$, we have $h^{-1}=\tilde{g}$, $g^{-1} = \tilde{h}$ and $v^{-1} = \tilde{v}$. It follows that $\{g,h\}=\{a,b\}$ or $\{g,h\}=\{a^{-1},b^{-1}\}$.

We consider the case $(g,h)=(a,b)$. Then $u=avb$. By induction hypothesis we have $\rho(v)=\left[ \begin{array}{cc}
v_{11} & v_{12} \\
v_{21} & v_{22} \end{array} \right]$ with $v_{21} = r v_{12}$. Hence 
\begin{eqnarray*}
\rho(u) &=& \rho(a) \rho(v) \rho(b) \\
&=& \left[ \begin{array}{cc}
M & 1 \\
0 & M^{-1} \end{array} \right] \left[ \begin{array}{cc}
v_{11} & v_{12} \\
v_{21} & v_{22} \end{array} \right] \left[ \begin{array}{cc}
M & 0 \\
r & M^{-1} \end{array} \right]\\
&=& \left[ \begin{array}{cc}
M^2 v_{11}+ M r v_{12} + M v_{21} +r v_{22} & u_{12} + M^{-1} v_{22} \\
v_{21} + M^{-1}r v_{22} & M^{-2} v_{22} \end{array} \right].
\end{eqnarray*}
It follows that $u_{21} = v_{21}+r M^{-1} v_{22} = r (v_{12} + M^{-1} v_{22}) = r u_{12}$.

The cases $(g,h)=(b,a)$, $(a^{-1},b^{-1})$ and $(b^{-1},a^{-1})$ can be proved similarly.
\end{proof} 

By Lemma \ref{2112} we have $\rho(w) = \left[ \begin{array}{cc}
w_{11} & w_{12} \\
w_{12} & w_{22} \end{array} \right]$ with $w_{21} = r w_{12}$. Then 
\begin{eqnarray*}
\rho(w)\rho(a) - \rho(b)\rho(w) &=& \left[ \begin{array}{cc}
w_{11} & w_{12} \\
w_{21} & w_{22} \end{array} \right] \left[ \begin{array}{cc}
M & 1 \\
0 & M^{-1} \end{array} \right] - \left[ \begin{array}{cc}
M & 0 \\
r & M^{-1} \end{array} \right] \left[ \begin{array}{cc}
w_{11} & w_{12} \\
w_{21} & w_{22} \end{array} \right] \\
&=& \left[ \begin{array}{cc}
0 & w_{11} - (M-M^{-1})w_{12} \\
-r w_{11} + (M-M^{-1})w_{21} & -r w_{12} + w_{21} \end{array} \right] \\
&=& \left[ \begin{array}{cc}
0 & w_{11} - (M-M^{-1})w_{12} \\
-r \big( w_{11} - (M-M^{-1})w_{12} \big) & 0 \end{array} \right].
\end{eqnarray*}
Hence $\rho(w)\rho(a) = \rho(b)\rho(w)$ if and only if 
$$w_{11} - (M-M^{-1})w_{12} = 0.$$
We call $w_{11} - (M-M^{-1})w_{12}$ the Riley polynomial of the group $G$. It describes the set of nonabelian representations of $G$ into $SL_2(\BC)$.

\subsection{L\^e's approach} In this subsection we determine the universal $SL_2(\BC)$-character ring of the group $G$ defined in \eqref{grp}. As a consequence, we obtain another description of the set of nonabelian $SL_2(\BC)$-representations of $G$. We will follow L\^e's approach in \cite{Le}.

We first recall the definitions of the character variety and universal character ring of a group. The set of representations of a finitely generated group $H$ into $SL_2(\BC)$ is an algebraic set defined over $\BC$, on which
$SL_2(\BC)$ acts by conjugation. The set-theoretic
quotient of the representation space by that action does not
have good topological properties, because two representations with
the same character may belong to different orbits of that action. A better
quotient, the algebro-geometric quotient denoted by $\chi(H)$
(see \cite{CS, LM}), has the structure of an algebraic
set. There is a bijection between $\chi(H)$ and the set of all
characters of representations of $H$ into $SL_2(\BC)$, hence
$\chi(H)$ is usually called the character variety of $H$. 

The character variety of $H$ is determined by the traces of some fixed elements $h_1, \cdots, h_k$ in $H$. More precisely, we can find $h_1, \cdots, h_k$ in $H$ such that for every element $h$ in $H$ there exists a polynomial $P_h$ in $k$ variables such that for any representation $\rho: H \to SL_2(\BC)$ we have $\tr \rho(h) = P_h(x_1, \cdots, x_k)$ where $x_i:=\tr \rho(h_i)$. The universal character ring of $H$ is defined to be the quotient of the polynomial ring $\BC[x_1, \cdots, x_k]$ by the ideal generated by all expressions of the form $\tr\rho(u)-\tr\rho(v)$, where $u$ and $v$ are any two words in the letters $g_1, \cdots, g_k$ which are equal in $H$, see e.g. \cite{BH}. The universal character ring of $H$ is actually independent of the choice of $h_1, \cdots, h_k$. The quotient of the universal character ring of $H$ by its nilradical is equal to the character ring of $H$, which is the ring of regular functions on the character variety $X(H)$.

Recall that $F_{a,b}$ is the free group on two letters $a$ and $b$. The character variety of $F_{a,b}$ is isomorphic to $\BC^3$ by the Fricke-Klein-Vogt theorem, see \cite{Fr, Vo}. It follows that for every word $u \in F_{a,b}$ there is a unique polynomial $P_u$ in 3 variables such that for any representation $\rho: F_{a,b} \to SL_2(\BC)$ we have $\tr \rho(u)=P_u (x,x',y)$ where $x:=\tr\rho(a),~x':=\tr\rho(b)$ and $y:=\tr\rho(ab)$.  

\begin{lemma} \label{tilder}
For $u \in F_{a,b}$ we have that $x-x'$ divides $\tr \rho(u) - \tr \rho(\tilde{u})$ in $\BC[x,x',y]$.
\end{lemma}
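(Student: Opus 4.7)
The plan is to use the Fricke--Klein--Vogt description of $\tr\rho(u)$ as a polynomial $P_u(x,x',y)$, and to exhibit $\tr\rho(\tilde u)$ as the same polynomial with $x$ and $x'$ swapped; divisibility by $x-x'$ then follows from the factor theorem in $\BC[x',y][x]$.

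First I would observe that the substitution $a\mapsto b^{-1}$, $b\mapsto a^{-1}$ extends to a group homomorphism $\phi\colon F_{a,b}\to F_{a,b}$, and by construction $\tilde u=\phi(u)$ for every $u\in F_{a,b}$. Given a representation $\rho\colon F_{a,b}\to SL_2(\BC)$, I set $\rho':=\rho\circ\phi$, so that $\rho'(a)=\rho(b)^{-1}$ and $\rho'(b)=\rho(a)^{-1}$. Using $\tr A=\tr A^{-1}$ for $A\in SL_2(\BC)$, one computes
\[
\tr\rho'(a)=x',\qquad \tr\rho'(b)=x,\qquad \tr\rho'(ab)=\tr\bigl(\rho(b)^{-1}\rho(a)^{-1}\bigr)=\tr\rho(ab)^{-1}=y.
\]

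Next, the Fricke--Klein--Vogt theorem supplies the universal polynomial $P_u\in\BC[x,x',y]$ satisfying $\tr\rho(u)=P_u(x,x',y)$ for every $\rho$. Applying this to $\rho'$ yields
\[
\tr\rho(\tilde u)=\tr\rho'(u)=P_u\bigl(\tr\rho'(a),\tr\rho'(b),\tr\rho'(ab)\bigr)=P_u(x',x,y),
\]
so $\tr\rho(u)-\tr\rho(\tilde u)=P_u(x,x',y)-P_u(x',x,y)$. Viewed as an element of $\BC[x',y][x]$, this polynomial vanishes when $x=x'$, hence is divisible by $x-x'$ in $\BC[x,x',y]$.

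There is no substantive obstacle here; the argument is essentially a symmetry observation combined with the factor theorem. The only point worth checking carefully is that $u\mapsto\tilde u$ really is induced by a group homomorphism (equivalently, that the replacement rule is compatible with inversion, as it is in the definition), so that substituting the representation $\rho\circ\phi$ into the universal polynomial $P_u$ is legitimate.
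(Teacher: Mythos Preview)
Your proof is correct and follows essentially the same approach as the paper: both arguments use the Fricke--Klein--Vogt polynomial $P_u$ and observe that $\tr\rho(\tilde u)=P_u(x',x,y)$, whence $x-x'$ divides the difference. Your version is simply more explicit in justifying the substitution via the homomorphism $\phi$ and the composed representation $\rho'=\rho\circ\phi$, whereas the paper states the key identity directly.
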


\begin{proof}
Since $\tilde{u}$ is the word obtained from $u$ by replacing $a$ by $b^{-1}$ and $b$ by $a^{-1}$, we have
$$
\tr \rho(\tilde{u}) = P_u(\tr \rho(b^{-1}), \tr \rho(a^{-1}), \tr \rho(b^{-1}a^{-1})) = P_u(x',x,y).
$$
Hence $\tr \rho(u) - \tr \rho(\tilde{u}) = P_u(x,x',y) - P_u(x',x,y)$ is divisible by $x-x'$ in $\BC[x,x',y]$.
\end{proof}

\begin{proposition} \cite[Prop.2.1]{Tran-universal}
\label{univ}
Let $H=\la a,b \mid u =v \ra$, where $u, v \in F_{a,b}$. Then the universal character ring of $H$ is the quotient of the polynomial ring $\BC[x,y,z]$ by the ideal generated by the four polynomials $\tr \rho(u) - \tr \rho(v)$, $\tr \rho(u a^{-1}) -\ tr \rho(v a^{-1})$, $\tr \rho(ub^{-1}) - \tr \rho(vb^{-1})$ and $\tr \rho(u a^{-1}b^{-1}) - \tr \rho(v a^{-1} b^{-1})$.
\end{proposition}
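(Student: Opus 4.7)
The inclusion $(R_0,R_1,R_2,R_3)\subseteq I$, where $I$ is the full defining ideal of the universal character ring of $H$, is immediate since $u=v$ in $H$ forces $uW=vW$ in $H$ for every $W\in F_{a,b}$. For the reverse inclusion, using cyclic invariance of trace together with the fact that every pair of words equal in $H$ is connected by a chain of single substitutions of $u$ by $v$, the ideal $I$ is generated as an ideal of $\BC[x,y,z]$ by the trace differences $f_W:=\tr\rho(uW)-\tr\rho(vW)$ as $W$ ranges over $F_{a,b}$. The problem thus reduces to showing $f_W\in I_0:=(R_0,R_1,R_2,R_3)$ for every $W$.

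The central step is the following expansion claim, proved by induction on the reduced length $\ell(W)$: there exist $\alpha_W,\beta_W,\gamma_W,\delta_W\in\BC[x,y,z]$ with
\[
\rho(W)=\alpha_W\,I+\beta_W\,\rho(a)+\gamma_W\,\rho(b)+\delta_W\,\rho(ba)\qquad\text{in }M_2(\BC).
\]
The base cases $\ell(W)\le 1$ use only the Cayley--Hamilton identities $\rho(a^{-1})=xI-\rho(a)$ and $\rho(b^{-1})=yI-\rho(b)$, both consequences of $X+X^{-1}=\tr(X)I$ in $SL_2(\BC)$. The inductive step amounts to multiplying the basis elements on the right by $\rho(g^{\pm1})$ for $g\in\{a,b\}$; the resulting multiplication table has polynomial entries in $x,y,z$. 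For example,
\[
\rho(a)\rho(b)=(z-xy)I+y\rho(a)+x\rho(b)-\rho(ba),
\]
obtained from $\rho(ab)=zI-\rho(b^{-1}a^{-1})$ combined with $\rho(b^{-1}a^{-1})=(yI-\rho(b))(xI-\rho(a))$; similarly $\rho(ba)\rho(b)=-xI+\rho(a)+z\rho(b)$ and $\rho(ba)\rho(a)=-\rho(b)+x\rho(ba)$.

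Given the expansion claim, linearity of trace yields
\[
f_W=\tr\bigl((\rho(u)-\rho(v))\rho(W)\bigr)=\alpha_W R_0+\beta_W f_a+\gamma_W f_b+\delta_W f_{ba}.
\]
Applying the trace identity $\tr(XY)+\tr(XY^{-1})=\tr(X)\tr(Y)$ with $X\in\{\rho(u),\rho(v)\}$ and $Y\in\{\rho(a),\rho(b),\rho(ba)\}$ and subtracting yields $f_a=xR_0-R_1$, $f_b=yR_0-R_2$, and $f_{ba}=zR_0-R_3$, the last identity using $(ba)^{-1}=a^{-1}b^{-1}$. Each of $f_a,f_b,f_{ba}$ lies in $I_0$, so $f_W\in I_0$ and the proposition follows.

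The main obstacle I anticipate is the inductive bookkeeping needed to verify the expansion claim: one must compile the complete multiplication table of $\{I,\rho(a),\rho(b),\rho(ba)\}$ against $\rho(a^{\pm1})$ and $\rho(b^{\pm1})$, and confirm polynomiality (not mere rationality) of every coefficient. Once this table is in hand, the remaining deductions reduce to a routine linearity computation.
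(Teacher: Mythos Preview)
The paper does not supply its own proof of this proposition; it is quoted verbatim from \cite{Tran-universal} as an input to the computation of the universal character ring of $G$. So there is nothing in the present paper to compare your argument against.

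That said, your argument is correct and self-contained. The reduction of the defining ideal to the family $f_W=\tr\rho(uW)-\tr\rho(vW)$ via chains of single relator substitutions and cyclic invariance of trace is standard and valid. The expansion claim---that every $\rho(W)$ lies in the $\BC[x,y,z]$-span of $\{I,\rho(a),\rho(b),\rho(ba)\}$---is the classical fact that the trace ring of $F_2$ is a free module of rank four over $\BC[x,y,z]$; your inductive verification via the right-multiplication table is exactly how one proves it, and the three sample entries you wrote down are correct (the remaining entries $\rho(a)^2=x\rho(a)-I$, $\rho(b)^2=y\rho(b)-I$, $\rho(b)\rho(a)=\rho(ba)$, together with $\rho(g^{-1})=\tr\rho(g)\,I-\rho(g)$, complete the table with visibly polynomial coefficients). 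The final step, rewriting $f_a,f_b,f_{ba}$ in terms of $R_0,\dots,R_3$ via $\tr(XY)+\tr(XY^{-1})=\tr X\,\tr Y$, is immediate.

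The only point worth making explicit in a final write-up is that the expansion identity holds \emph{universally} (as an equality of polynomial functions of $x,y,z$, not merely for a fixed numerical representation), so that subtracting the $u$- and $v$-versions really yields an identity in $\BC[x,y,z]$; your induction already delivers this, since each step invokes only Cayley--Hamilton and the defining trace variables.
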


Recall that $G=\la a, b \mid wa = bw\ra$ where $\tilde{w} = w^{-1}$. We now describe the universal character ring of $G$. In this case, since $a$ and $b=waw^{-1}$ are conjugate, we have $\tr \rho(a) = \tr \rho(b)$. This means that $x=x'$.

\begin{thm}
The universal character ring of $G$ is the quotient of the polynomial ring $\BC[x,y]$ by the principal ideal generated by the polynomial $\tr \rho(bwa^{-1})-\tr \rho(w).$
\label{main}
\end{thm}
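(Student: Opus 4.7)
The plan is to apply Proposition~\ref{univ} with $u = wa$ and $v = bw$. This exhibits the universal character ring of $G$ as $\BC[x,x',y]/I$, where $I$ is generated by the four polynomials
\[
f_1 = \tr\rho(wa) - \tr\rho(bw), \quad f_2 = \tr\rho(w) - \tr\rho(bwa^{-1}),
\]
\[
f_3 = \tr\rho(wab^{-1}) - \tr\rho(bwb^{-1}), \quad f_4 = \tr\rho(wb^{-1}) - \tr\rho(bwa^{-1}b^{-1}).
\]
Cyclic invariance of trace gives $\tr\rho(bwb^{-1}) = \tr\rho(w)$ and $\tr\rho(bwa^{-1}b^{-1}) = \tr\rho(wa^{-1})$, so the task reduces to showing $I = (x - x',\,f_2)$; quotienting by $x-x'$ then yields $\BC[x,y]/(f_2)$ as stated.

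For the inclusion $I \subseteq (x-x',\,f_2)$, I would exploit that $u \mapsto \tilde u$ is the group homomorphism $F_{a,b} \to F_{a,b}$ sending $a \mapsto b^{-1}$ and $b \mapsto a^{-1}$. Combined with $\tilde w = w^{-1}$, the identity $\tr\rho(X) = \tr\rho(X^{-1})$, and cyclicity, direct calculation gives
\[
\tr\rho(\widetilde{wa}) = \tr\rho(bw), \quad \tr\rho(\widetilde{wb^{-1}}) = \tr\rho(wa^{-1}), \quad \tr\rho(\widetilde{wab^{-1}}) = \tr\rho(bwa^{-1}).
\]
Lemma~\ref{tilder} then forces $(x-x') \mid f_1$, $(x-x') \mid f_4$, and $(x-x') \mid (f_3 + f_2)$, so $f_1, f_4 \in (x-x')$ and $f_3 \in (x-x',\,f_2)$.

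For the reverse inclusion $(x-x',\,f_2) \subseteq I$, the membership $f_2 \in I$ is immediate, so the crux is $(x-x') \in I$. Using the $SL_2$ trace identity $\tr\rho(AB) + \tr\rho(AB^{-1}) = \tr\rho(A)\tr\rho(B)$ together with cyclicity, I would derive the explicit identities $f_1 - f_4 = (x-x')\tr\rho(w)$ and $x'f_1 - f_2 - f_3 = (x-x')\tr\rho(bw)$ by direct expansion, placing both $(x-x')\tr\rho(w)$ and $(x-x')\tr\rho(bw)$ inside $I$. The main obstacle is extracting $(x-x')$ itself rather than only $(x-x')$-multiples of various trace polynomials. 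The conceptual handle is that $wa = bw$ in $G$ gives $b = waw^{-1}$, whence $x = \tr\rho(waw^{-1}) = x'$ must already be a consequence of the four $f_i$; tracing this group-theoretic identity through the Fricke-Klein-Vogt trace calculus should realize $x-x'$ explicitly as a $\BC[x,x',y]$-linear combination of $f_1, f_2, f_3, f_4$, completing the proof.
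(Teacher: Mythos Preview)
Your approach is essentially the paper's, and your computations for the inclusion $I \subseteq (x-x',\,f_2)$ are correct and match the paper's exactly. The only difference is in how the reverse inclusion is handled, and here you are making the last step harder than it needs to be.

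You worry about exhibiting $x-x'$ \emph{explicitly} as a $\BC[x,x',y]$-linear combination of $f_1,f_2,f_3,f_4$. But this is unnecessary: Proposition~\ref{univ} asserts that $(f_1,f_2,f_3,f_4)$ is precisely the defining ideal of the universal character ring, i.e., the ideal generated by all $\tr\rho(u)-\tr\rho(v)$ with $u=v$ in $G$. Since $b=waw^{-1}$ in $G$, the element $x'-x=\tr\rho(b)-\tr\rho(waw^{-1})$ lies in that defining ideal by definition, hence in $(f_1,f_2,f_3,f_4)$, with no explicit trace calculus required. You already say this yourself (``must already be a consequence of the four $f_i$''); that sentence \emph{is} the argument, and the subsequent ``tracing\dots should realize $x-x'$ explicitly'' is superfluous. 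The paper makes this observation before the theorem statement and therefore works in $\BC[x,y]$ from the outset, reducing the proof to checking that (modulo $x-x'$) $f_1$ and $f_4$ vanish and $f_3=-f_2$.
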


\begin{proof}
By Proposition \ref{univ} the universal character ring of $G$ is the quotient of the polynomial ring $\BC[x,z]$ by the ideal generated by the four polynomials 
\begin{eqnarray*}
&&\tr \rho(wa) - \tr \rho(bw), \\
&& \tr \rho((wa) a^{-1}) - \tr \rho((bw) a^{-1}) = \tr \rho(w) - \tr \rho(bwa^{-1}),\\
&& \tr \rho((wa)b^{-1}) - \tr \rho((bw)b^{-1}) = \tr \rho(wab^{-1}) - \tr \rho(w),\\
&&\tr \rho((wa) a^{-1}b^{-1}) - \tr \rho((bw) a^{-1} b^{-1}) = \tr \rho(wb^{-1}) - \tr \rho(wa^{-1}).
\end{eqnarray*}

Since $x=x'$, by Lemma \ref{tilder} we have $\tr \rho(u) = \tr \rho(\tilde{u})$ for all $u \in F_{a,b}$. Hence
\begin{eqnarray*}
\tr \rho(wa) - \tr \rho(bw) &=& \tr \rho(\widetilde{wa}) - \tr \rho(bw) = \tr \rho(\tilde{w} \tilde{a}) - \tr \rho(bw) \\
&=& \tr \rho(w^{-1}b^{-1}) - \tr \rho(bw) = 0,\\
\tr \rho(wab^{-1}) - \tr \rho(w) &=& \tr \rho(\widetilde{wab^{-1}}) - \tr \rho(w) = \tr \rho(\tilde{w}b^{-1}a) - \tr \rho(w) \\
&=& \tr \rho(w^{-1}b^{-1}a) - \tr \rho(w) = \tr \rho(bwa^{-1}) - \tr \rho(w),\\
\tr \rho(wb^{-1}) - \tr \rho(wa^{-1}) &=& \tr \rho(\widetilde{wb^{-1}}) - \tr \rho(wa^{-1}) = \tr \rho(\tilde{w}a) - \tr \rho(wa^{-1}) \\
&=& \tr \rho(w^{-1}a) - \tr \rho(wa^{-1}) = 0.
\end{eqnarray*}
The theorem then follows.
\end{proof}

Now suppose $\rho: G \to SL_2(\BC)$ is a nonabelian representation of the form \eqref{conj}. Then
\begin{eqnarray*}
\tr \rho(bwa^{-1}) - \tr \rho(w) &=& (w_{11}+w_{22} -rw_{11} - M^{-1} w_{21}+r M w_{12}) - (w_{11} + w_{22}) \\
&=& -r (w_{11} - M w_{12} + M^{-1}r^{-1} w_{21}).
\end{eqnarray*}
Hence, by Theorem \ref{main}, we have $\rho(wa) = \rho(bw)$ if and only if $$w_{11} - (M w_{12} - M^{-1}r^{-1} w_{21})=0.$$
We call $w_{11} - (M w_{12} - M^{-1}r^{-1} w_{21})$ the L\^e polynomial of $G$.

\begin{remark}
Since $w_{21} = r w_{12}$ (by Lemma \ref{2112}), it is easy to see that the L\^e equation is equal to the Riley equation.
\end{remark}

\subsection{Mednykh's approach} In this subsection we study the set of nonabelian $SL_2(\BC)$-representations of the group $G$ defined in \eqref{grp}, following Mednykh's approach in \cite{HMP}. 

\begin{lemma} \label{-I}
Suppose $C \in SL_2(\BC)$. Then $C^2 = -I$ if and only if $\tr C=0$.
\end{lemma}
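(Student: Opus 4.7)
The plan is to invoke the Cayley--Hamilton theorem for $2 \times 2$ matrices. Since $C \in SL_2(\BC)$ has $\det C = 1$, its characteristic polynomial is $\lambda^2 - (\tr C)\lambda + 1$, so Cayley--Hamilton gives the matrix identity
$$C^2 - (\tr C)\, C + I = 0, \qquad \text{i.e.}\qquad C^2 = (\tr C)\, C - I.$$

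From this single relation both implications drop out immediately. First, if $\tr C = 0$ then the right-hand side collapses to $-I$, proving $C^2 = -I$. Conversely, suppose $C^2 = -I$. Substituting into the Cayley--Hamilton identity gives $-I = (\tr C)\, C - I$, hence $(\tr C)\, C = 0$. Since $\det C = 1 \neq 0$, the matrix $C$ is nonzero, so the scalar $\tr C$ must vanish.

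There is no real obstacle here; the only thing to double-check is the sign conventions in the characteristic polynomial (which for $SL_2(\BC)$ unambiguously reads $\lambda^2 - (\tr C)\lambda + 1$). The argument is self-contained and requires no input from the preceding material on the group $G$ or the Riley polynomial.
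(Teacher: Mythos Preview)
Your proof is correct and takes essentially the same approach as the paper: both invoke the Cayley--Hamilton identity $C^2 - (\tr C)C + I = 0$ and read off the equivalence. You supply a bit more detail on the converse direction (noting $C \neq 0$ since $\det C = 1$), but the argument is identical in substance.
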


\begin{proof}
By the Cayley-Hamilton theorem we have $C^2-(\tr C)C+I=0$. It follows that $C^2+I=0$ if and only if $\tr C=0$.
\end{proof}

\begin{proposition} \label{CC}
Suppose $C \in SL_2(\BC)$ such that $C \rho(b) = \rho(a^{-1})C$ and $C^2=-I$. Then for any word $u \in F_{a,b}$ we have
\begin{equation} \label{C}
C \rho(u) =\rho(\tilde{u}) C.
\end{equation}
\end{proposition}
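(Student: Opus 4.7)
The plan is to prove the identity by induction on the length $\ell(u)$ of $u$, after reducing it to the four letter generators $a, b, a^{-1}, b^{-1}$. The crucial algebraic fact I will use repeatedly is that $C^2 = -I$ forces $C^{-1} = -C$, so that $C^2$ is central and cancels cleanly in any rearrangement.

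For the base cases, the identity $C\rho(b) = \rho(a^{-1})C = \rho(\tilde b)C$ is exactly the hypothesis. To obtain $C\rho(a) = \rho(b^{-1})C$, I would take inverses of the hypothesis to get $\rho(b^{-1})C^{-1} = C^{-1}\rho(a)$ and then use $C^{-1} = -C$ to cancel the two minus signs, yielding $\rho(b^{-1})C = C\rho(a)$, i.e.\ $C\rho(a) = \rho(\tilde a)C$. The remaining two cases $u = a^{-1}$ and $u = b^{-1}$ follow by conjugating the two identities above by $C$ from the appropriate side and again exploiting that $C^2 = -I$ is central, which makes the sign ambiguity in writing $\rho(g) = C^{\pm 1}\rho(\tilde g^{-1})C^{\mp 1}$ disappear.

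For the inductive step, write a reduced word of length $\geq 2$ as $u = g \cdot v$, where $g \in \{a^{\pm 1}, b^{\pm 1}\}$ and $\ell(v) = \ell(u) - 1$. By the base case applied to $g$ and by the induction hypothesis applied to $v$,
\[
C\rho(u) = C\rho(g)\rho(v) = \rho(\tilde g)C\rho(v) = \rho(\tilde g)\rho(\tilde v) C = \rho(\tilde g \tilde v) C.
\]
Since the operation $u \mapsto \tilde u$ is defined letterwise, it respects concatenation, so $\widetilde{g v} = \tilde g \tilde v$, giving $C\rho(u) = \rho(\tilde u) C$ as desired.

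The main substance of the argument is really only in the base cases; the induction is then completely mechanical. The one place where a reader could get tripped up is in the passage between $C\rho(g) = \rho(\tilde g)C$ and $\rho(g)C = C\rho(\tilde g^{-1})$ for $g \in \{a^{-1}, b^{-1}\}$, where the cancellation $C^2 = -I \in Z(SL_2(\mathbb{C}))$ is essential; once this is observed, there is no real obstacle.
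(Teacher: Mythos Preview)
Your proof is correct and follows essentially the same approach as the paper: establish the identity for the four letters $a^{\pm 1}, b^{\pm 1}$ using inverses and the relation $C^{-1}=-C$, then induct on word length via $u=gv$. The paper's derivation of the base cases is organized slightly differently (it obtains $C\rho(b^{-1})=\rho(a)C$ first by inverting the conjugation relation $C\rho(b)C^{-1}=\rho(a^{-1})$, then derives $C\rho(a)=\rho(b^{-1})C$, then inverts again), but the content and the inductive step are the same as yours.
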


\begin{proof} By taking the inverse of $C \rho(b) = \rho(a^{-1})C$ we get $C \rho(b^{-1}) = \rho(a)C$.

Since $C=-C^{-1}$ we have
$$C \rho(a) = -C^{-1}\rho(a) = -(\rho(a^{-1})C)^{-1} = - (C \rho(b))^{-1} = - \rho(b)^{-1} C^{-1} = \rho(b^{-1}) C.$$
By taking the inverse of $C \rho(a) = \rho(b^{-1})C$ we get $C \rho(a^{-1}) = \rho(b)C$. Hence $$
C \rho(u) =\rho(\tilde{u}) C.
$$ for $u \in \{a^{\pm 1}, b^{\pm 1}\}$.

We now prove \eqref{C} by induction on the length $\ell(u)$ of $u \in F_{a,b}$. If $\ell(u)=1$, then $u \in  \{a^{\pm 1}, b^{\pm 1}\}$. By the above arguments we have $
C \rho(u) =\rho(\tilde{u}) C.
$

Suppose $\ell(u)>1$. Then we can write $u=g v$ where $g \in \{a^{\pm 1}, b^{\pm 1}\}$ and $v \in F_{a,b}$ with $\ell(v) = \ell(u)-1$. By induction hypothesis $C \rho(v) =\rho(\tilde{v}) C$. We have
$$\rho(\tilde{u}) C = \rho(\tilde{g}) \rho(\tilde v) C = \rho(\tilde{g}) C \rho(v) = C \rho(g) \rho(v)  = C \rho(u) .$$
The proposition follows.
\end{proof}

Now consider $G = \la a,b \mid wa=bw \ra$ where $w \in F_{a,b}$ with $\tilde{w} = w^{-1}$. Suppose $\rho: F_{a,b} \to SL_2(\BC)$ is a nonabelian representation of the form \eqref{conj}. 

Let $C = \left[ \begin{array}{cc}
0 & -1/\sqrt{r} \\
\sqrt{r} & 0 \end{array} \right]$. Then it is easy to check that $C \rho(b)= \rho(a^{-1})C$ and $C^2=-I$. By Proposition \ref{CC} we have $C \rho(u) =\rho(\tilde{u}) C$ for all $u \in F_{a,b}$. 

Since $w^{-1} = \tilde{w}$ we have
\begin{eqnarray*}
\rho(bwa^{-1}w^{-1}) = - \rho(bwa^{-1}\tilde{w}) C^2 = -\rho(bwa^{-1}) C \rho(w) C = -\rho(bw) C \rho(bw) C.
\end{eqnarray*}
Hence $\rho(bwa^{-1}w^{-1}) = I$ if and only if $\big( \rho(bw) C \big)^2 = -I$. By Lemma \ref{-I}, this is equivalent to $\tr\big( \rho(bw) C \big) = 0$. Since $\tr\big( \rho(bw) C \big) = \sqrt{r} \, M w_{12} - (r w_{11} + M^{-1} w_{21})/\sqrt{r},$
we conclude that $\rho(wa) = \rho(bw)$ if and only if $$w_{11}-(Mw_{12}-M^{-1}r^{-1}w_{21})=0.$$
We call $w_{11}-(Mw_{12}-M^{-1}r^{-1}w_{21})$ the Mednykh polynomial of $G$. Note that it is exactly the L\^e equation, which is equal to the Riley polynomial (by Lemma \ref{2112}).

\section{Proofs of Theorems \ref{odd} and \ref{even}}

\label{pf}

In this section we apply the result in Section \ref{Riley} to calculate the Riley polynomial of the double twist knot $J(k,2n)$ and then we prove Theorems \ref{odd} and \ref{even}. 

\subsection{The Riley polynomial} By \cite{HS} the knot group of $K=J(k,2n)$ has a presentation
$\pi_1(X_K) = \la a,b~|~w^na=bw^n \ra,$ where $a,b$ are meridians and
$$w = 
\begin{cases} 
(ba^{-1})^mba(b^{-1}a)^m, & \text{if~} k=2m+1,\\
(ba^{-1})^m(b^{-1}a)^m, & \text{if~} k=2m.
\end{cases}$$

Suppose $\rho: \pi_1(X_K) \to SL_2(\BC)$ is a nonabelian representation. 
Taking conjugation if necessary, we can assume that $\rho$ 
has the form 
\begin{equation}
\rho(a)=\left[ \begin{array}{cc}
M & 1\\
0 & M^{-1} \end{array} \right] \quad \text{and} \quad \rho(b)=\left[ \begin{array}{cc}
M & 0\\
2-z & M^{-1} \end{array} \right]
\label{nonabelian}
\end{equation}
where $(M,z) \in \BC^* \times \BC$ satisfies the matrix equation $\rho(w^na)=\rho(bw^n)$. Note that $z =\tr \rho(ab^{-1})=M^2+M^{-2}+2 - \tr \rho(ab)$.

The following lemmas are elementary, see e.g. \cite{Tran-tap}.

\begin{lemma} \label{chev} We have $$S^2_j(\omega) - \omega S_j(\omega) S_{j-1}(\omega) + S^2_{j-1}(\omega)=1.$$
\end{lemma}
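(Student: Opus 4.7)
The plan is to prove the identity by induction on $j$, using the defining three-term recurrence $S_{j}(\omega) = \omega S_{j-1}(\omega) - S_{j-2}(\omega)$ to show that the left-hand side is independent of $j$. Set
$$f_j(\omega) := S_j^2(\omega) - \omega\, S_j(\omega) S_{j-1}(\omega) + S_{j-1}^2(\omega),$$
so the claim is $f_j \equiv 1$ for all integers $j$.

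For the base case I would check $j=1$ directly: with $S_0 = 1$ and $S_1 = \omega$ one computes $f_1 = \omega^2 - \omega \cdot \omega \cdot 1 + 1 = 1$. (If one wants to include $j=0$, note that the recurrence forces $S_{-1}=0$, giving $f_0 = 1$ as well.)

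For the inductive step I would compute the difference
$$f_j - f_{j-1} = S_j^2 - S_{j-2}^2 - \omega S_{j-1}(S_j - S_{j-2}) = (S_j - S_{j-2})\bigl(S_j + S_{j-2} - \omega S_{j-1}\bigr),$$
and observe that the second factor vanishes identically by the Chebyshev recurrence $S_j + S_{j-2} = \omega S_{j-1}$. Hence $f_j = f_{j-1}$, and induction yields $f_j = 1$ for every $j$.

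There is really no main obstacle here: the identity is a direct consequence of the recurrence, and the computation of $f_j - f_{j-1}$ is a one-line algebraic factorization. Alternatively, one could give a trigonometric proof by substituting $\omega = 2\cos\theta$ and using $S_j(2\cos\theta) = \sin((j+1)\theta)/\sin\theta$, after which the identity reduces to $\sin^2(j+1)\theta - 2\cos\theta \sin(j+1)\theta \sin j\theta + \sin^2 j\theta = \sin^2\theta$, which follows from the product-to-sum formulas; but the inductive argument above is cleaner and avoids case distinctions on the sign of $\sin\theta$.
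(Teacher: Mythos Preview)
Your induction argument is correct: the factorization $f_j - f_{j-1} = (S_j - S_{j-2})(S_j + S_{j-2} - \omega S_{j-1})$ together with the recurrence $S_j + S_{j-2} = \omega S_{j-1}$ shows $f_j$ is constant in $j$, and the base case gives the value $1$. The paper itself does not prove this lemma at all --- it simply declares it elementary and refers to \cite{Tran-tap} --- so your argument supplies exactly the kind of short verification the paper omits.
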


\begin{lemma}
\label{formula}
Suppose $V = \left[ \begin{array}{cc}
v_{11} & v_{12} \\
v_{21} & v_{22} \end{array} \right] \in SL_2(\BC)$. Then 
$$
V^j = \left[ \begin{array}{cc}
S_{j}(v) - v_{22} S_{j-1}(v) & v_{12} S_{j-1}(v) \\
v_{21} S_{j-1}(v) & S_{j}(v) - v_{11} S_{j-1}(v) \end{array} \right],
$$
where $v:= \tr V = v_{11}+v_{22}$.
\end{lemma}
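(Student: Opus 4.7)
The plan is to apply the Cayley-Hamilton theorem to $V$ and induct on $j$. Since $V \in SL_2(\BC)$ has determinant $1$ and trace $v$, its characteristic polynomial gives the quadratic identity $V^2 = vV - I$. Notice that this is precisely the three-term relation governing the Chebyshev polynomials $S_j$, so at heart the proof is just the observation that the sequences $\{V^j\}_{j \in \BZ}$ and $\{S_{j-1}(v)V - S_{j-2}(v)I\}_{j \in \BZ}$ satisfy the same linear recursion with the same two consecutive terms, after which one only needs to match matrix entries.

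Concretely, I would first prove by induction on $j$ the compact scalar form
$$V^j = S_{j-1}(v)\, V - S_{j-2}(v)\, I,$$
using the conventions $S_{-1}(v) = 0$ and $S_{-2}(v) = -1$ forced by extending $S_j(\omega) = \omega S_{j-1}(\omega) - S_{j-2}(\omega)$ to all integers. The base cases $j = 0, 1$ reduce to $V^0 = I$ and $V^1 = V$. For the inductive step I multiply by $V$, substitute $V^2 = vV - I$, and regroup using the Chebyshev recursion, obtaining $V^{j+1} = S_j(v)V - S_{j-1}(v)I$. The argument for negative $j$ runs identically after noting that $V^{-1} = vI - V$ satisfies the same relation.

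Finally I would read off the four matrix entries from the compact form. The off-diagonal entries are immediate: the $(1,2)$ and $(2,1)$ entries are $v_{12} S_{j-1}(v)$ and $v_{21} S_{j-1}(v)$, matching the statement. For the $(1,1)$ entry one gets $v_{11} S_{j-1}(v) - S_{j-2}(v)$; substituting $S_{j-2}(v) = v S_{j-1}(v) - S_j(v)$ and using $v - v_{11} = v_{22}$ rewrites this as $S_j(v) - v_{22} S_{j-1}(v)$. The $(2,2)$ entry is handled symmetrically using $v - v_{22} = v_{11}$.

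There is no genuine obstacle in this argument — it is essentially Cayley-Hamilton repackaged through the Chebyshev recursion. The only bookkeeping point to watch is the indexing at the boundary: one must fix the values $S_{-1}(v) = 0$ and $S_{-2}(v) = -1$ consistent with the two-sided recursion so that the compact form evaluates correctly at $j = 0$ and $j = 1$, and so that the final rewriting of the diagonal entries uses the Chebyshev identity in the right direction.
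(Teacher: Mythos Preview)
Your argument is correct: the Cayley--Hamilton identity $V^2 = vV - I$ is exactly the Chebyshev recursion, so the compact form $V^j = S_{j-1}(v)\,V - S_{j-2}(v)\,I$ follows by a straightforward two-sided induction, and the entrywise formula in the lemma drops out after the substitution $S_{j-2}(v) = vS_{j-1}(v) - S_j(v)$ on the diagonal. The paper itself does not supply a proof of this lemma --- it simply declares it elementary and refers to \cite{Tran-tap} --- so there is no in-paper argument to compare against; your write-up is a standard and complete justification.
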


\subsubsection{The case $k=2m+1$} In this case we have $w=(ba^{-1})^m ba (b^{-1}a)^m$.

\begin{proposition} \label{w}
We have $\rho(w)=\left[ \begin{array}{cc}
w_{11} & w_{12} \\
(2-z)w_{12} & w_{22} \end{array} \right]$ where
\begin{eqnarray*}
w_{11} &=& M^2 S^2_{m}(z) - 2M^2 S_m(z)S_{m-1}(z) + (2+M^2-z)S^2_{m-1}(z),\\
w_{12} &=& \big( S_m(z) - S_{m-1}(z) \big) \big( M S_m(z) - M^{-1} S_{m-1}(z) \big),\\
w_{22} &=& (M^{-2} + 2 - z) S^2_{m}(z) - 2 M^{-2} S_m(z)S_{m-1}(z) + M^{-2} S^2_{m-1}(z).
\end{eqnarray*}
\end{proposition}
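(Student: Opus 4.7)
The plan is to write $w = (ba^{-1})^m\cdot (ba) \cdot (b^{-1}a)^m$ and compute the three factors separately. First I would evaluate $\rho(ba^{-1})$ and $\rho(b^{-1}a)$ directly from \eqref{nonabelian}; a short matrix computation yields
\[
\rho(ba^{-1}) = \begin{pmatrix} 1 & -M \\ (2-z)M^{-1} & z-1 \end{pmatrix}, \qquad
\rho(b^{-1}a) = \begin{pmatrix} 1 & M^{-1} \\ (z-2)M & z-1 \end{pmatrix},
\]
so that both matrices lie in $SL_2(\BC)$ and have trace $z$. Similarly $\rho(ba)$ is read off directly from \eqref{nonabelian}.

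Next I would apply Lemma \ref{formula} (the Chebyshev power formula), with $v=z$, to obtain closed-form expressions for $\rho(ba^{-1})^m$ and $\rho(b^{-1}a)^m$ as $2\times 2$ matrices whose entries are $\BC$-linear combinations of $S_m(z)$ and $S_{m-1}(z)$. Writing $A := S_m(z) - (z-1)S_{m-1}(z)$ and $D := S_m(z)-S_{m-1}(z)$, the top-left entry of each $m$-th power is $A$ and the bottom-right entry is $D$, while the off-diagonal entries differ only by the scalar factors $\pm M$ or $\pm M^{-1}$ and $(2-z)$.

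I would then compute the product in two steps: first $\rho(ba^{-1})^m\rho(ba)$, then right-multiply by $\rho(b^{-1}a)^m$. At each stage, the key algebraic simplification is the identity $A-(2-z)S_{m-1}(z) = D$, which follows immediately from $(z-1)+(2-z)=1$. This collapses the entries of the intermediate products into expressions involving only $S_m(z)$, $S_{m-1}(z)$, and $D$, making the final three entries $w_{11}$, $w_{12}$, $w_{22}$ come out exactly as in the statement. For instance, $w_{12}$ simplifies via $S_{m-1}(z)+D = S_m(z)$ to $D\bigl(MS_m(z) - M^{-1}S_{m-1}(z)\bigr)$.

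The main obstacle is purely bookkeeping: keeping track of the powers of $M^{\pm 1}$ and the signs of $(2-z)$ through a triple matrix multiplication. As a consistency check, the off-diagonal entries should satisfy $w_{21} = (2-z)w_{12}$, in agreement with Lemma \ref{2112} applied to $w$ (which by construction satisfies $\tilde w = w^{-1}$); verifying that this identity pops out of the direct computation confirms that no sign has been dropped. Lemma \ref{chev} is not needed for the proposition itself, though it would be useful if one wanted to further simplify $\tr\rho(w)$.
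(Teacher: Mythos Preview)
Your proposal is correct and follows essentially the same approach as the paper: compute $\rho(ba^{-1})$ and $\rho(b^{-1}a)$ directly, apply Lemma~\ref{formula} to obtain their $m$-th powers, and then multiply out $\rho((ba^{-1})^m)\rho(ba)\rho((b^{-1}a)^m)$. The paper records exactly the same matrices for $\rho((ba^{-1})^m)$ and $\rho((b^{-1}a)^m)$ and then appeals to a direct calculation; your added observations about the simplification $A-(2-z)S_{m-1}(z)=D$ and the consistency check $w_{21}=(2-z)w_{12}$ via Lemma~\ref{2112} are correct and helpful but not in the paper.
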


\begin{proof}
Since $\rho(ba^{-1})=\left[ \begin{array}{cc}
1 & -M \\
M^{-1}(2-z) & z-1 \end{array} \right]$, by Lemma \ref{formula} we have 
$$\rho((ba^{-1})^m)=\left[ \begin{array}{cc}
S_m(z)-(z-1)S_{m-1}(z) & -MS_{m-1}(z) \\
M^{-1}(2-z)S_{m-1}(z) & S_m(z)-S_{m-1}(z) \end{array} \right].$$
Similarly, $$\rho((b^{-1}a)^m)=\left[ \begin{array}{cc}
S_m(z)-(z-1)S_{m-1}(z) & M^{-1}S_{m-1}(z) \\
M(z-2)S_{m-1}(z) & S_m(z)-S_{m-1}(z) \end{array} \right].$$
Since $\rho(w)=\rho((ba^{-1})^m ba (b^{-1}a)^m)$, the lemma follows by a direct calculation.
\end{proof}

\begin{proposition} \label{R}
The Riley polynomial of $J(2m+1,2n)$ is 
$$\Phi_{J(2m+1,2n)}(M,z) = S_n(t_m) - \Big( 1  - (z-M^2-M^{-2})S_{m}(z) \big( S_m(z)- S_{m-1}(z) \big) \Big) S_{n-1}(t_m)$$
where $t_m:= \tr \rho(w) = (M^2+M^{-2}+2-z) - (z-2)(z-M^2-M^{-2}) S_m(z) S_{m-1}(z)$.
\end{proposition}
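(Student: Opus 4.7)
The plan is to combine Proposition \ref{w}, Lemma \ref{formula} (to raise $\rho(w)$ to the $n$-th power), and the definition of the Riley polynomial from Section \ref{Riley}, keeping in mind that the relation in $\pi_1(X_K)$ is $w^n a = b w^n$, so the relevant word to plug into the Riley polynomial recipe is $w^n$, not $w$ itself. Explicitly, the Riley polynomial is
\[
\Phi_{J(2m+1,2n)}(M,z) \;=\; (w^n)_{11} - (M-M^{-1})(w^n)_{12}.
\]

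First, by Proposition \ref{w}, $\rho(w)$ is a matrix of determinant $1$ whose off-diagonal entries satisfy $(w)_{21} = (2-z)(w)_{12}$, consistent with Lemma \ref{2112} (with $r=2-z$). Applying Lemma \ref{formula} to $V = \rho(w)$ and $v = \tr \rho(w) =: t_m$ yields
\[
(w^n)_{11} = S_n(t_m) - w_{22}\,S_{n-1}(t_m), \qquad (w^n)_{12} = w_{12}\,S_{n-1}(t_m).
\]
Therefore $\Phi_{J(2m+1,2n)}(M,z) = S_n(t_m) - \bigl[w_{22} + (M-M^{-1})w_{12}\bigr]\,S_{n-1}(t_m)$, and the proposition reduces to two identities:
\[
t_m = (M^2+M^{-2}+2-z) - (z-2)(z-M^2-M^{-2})\,S_m(z)\,S_{m-1}(z),
\]
\[
w_{22} + (M-M^{-1})w_{12} = 1 - (z-M^2-M^{-2})\,S_m(z)\bigl(S_m(z)-S_{m-1}(z)\bigr).
\]

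Next I would verify the trace identity. Adding the expressions for $w_{11}$ and $w_{22}$ from Proposition \ref{w} gives
\[
t_m = (M^2+M^{-2}+2-z)(S_m^2+S_{m-1}^2) - 2(M^2+M^{-2})\,S_m S_{m-1},
\]
and the Chebyshev identity of Lemma \ref{chev}, $S_m^2 + S_{m-1}^2 = 1 + z\,S_m S_{m-1}$, converts this into the claimed closed form after a one-line simplification.

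Finally, I would handle the coefficient of $S_{n-1}(t_m)$. Multiplying out
$(M-M^{-1})w_{12} = (M-M^{-1})(S_m-S_{m-1})(M S_m - M^{-1} S_{m-1})$
and adding $w_{22}$ yields
\[
(M^2+M^{-2}+1-z)\,S_m^2 - (M^2+M^{-2})\,S_m S_{m-1} + S_{m-1}^2.
\]
Eliminating $S_{m-1}^2$ by means of Lemma \ref{chev} collapses this to $1 + (M^2+M^{-2}-z)\,S_m(S_m - S_{m-1})$, which is exactly the desired expression. The hard part is purely bookkeeping: the second identity is where every term of $w_{22}$ and every cross term from $(M-M^{-1})w_{12}$ must cancel precisely, and only one application of Lemma \ref{chev} produces the compact factorization $S_m(S_m - S_{m-1})$; I would carry out that simplification carefully rather than blindly, since otherwise one is tempted to expand $w_{11}+(M^{-1}-M)w_{21}/(2-z)$ instead, which gives the same answer only after invoking $w_{21} = (2-z)w_{12}$.
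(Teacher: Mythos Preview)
Your proof is correct and follows essentially the same route as the paper: use Lemma \ref{formula} to write $(w^n)_{11}-(M-M^{-1})(w^n)_{12}=S_n(t_m)-\bigl(w_{22}+(M-M^{-1})w_{12}\bigr)S_{n-1}(t_m)$, then simplify $t_m=w_{11}+w_{22}$ and $w_{22}+(M-M^{-1})w_{12}$ via Lemma \ref{chev}. In fact you spell out the algebraic bookkeeping for the second simplification more explicitly than the paper does (and you avoid the paper's harmless sign typo in the intermediate display).
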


\begin{proof}
By Proposition \ref{w} we have 
$$t_m = w_{11} + w_{22} = (M^2+M^{-2}+2-z)(S^2_{m}(z) + S^2_{m-1}(z))  - 2(M^2+M^{-2})S_m(z)S_{m-1}(z).$$
Since $S^2_{m}(z)+S^2_{m-1}(z)=1+zS_m(z)S_{m-1}(z)$ (by Lemma \ref{chev}), we have 
\begin{eqnarray*}
t_m 
&=& (M^2+M^{-2}+2-z)(1 + z S_{m}(z)S_{m-1}(z))  - 2(M^2+M^{-2})S_m(z)S_{m-1}(z)\\
&=& (M^2+M^{-2}+2-z) - (z-2)(z-M^2-M^{-2}) S_m(z) S_{m-1}(z).
\end{eqnarray*}

Since $\rho(w)=\left[ \begin{array}{cc}
w_{11} & w_{12} \\
(2-z)w_{12} & w_{22} \end{array} \right]$, by Lemma \ref{formula} we have $$\rho(w^n) = \left[ \begin{array}{cc}
S_n(t_m)-w_{22} \, S_{n-1}(t_m) & w_{12} \, S_{n-1}(t_m) \\
(2-z)w_{12} \, S_{n-1}(t_m) & S_n(t_m)-w_{11} \, S_{n-1}(t_m) \end{array} \right].$$ Hence the Riley polynomial is 
\begin{eqnarray*}
\Phi_K(M,z) &=&  S_n(t_m)-w_{22} \, S_{n-1}(t_m) - (M-M^{-1})w_{12} \, S_{n-1}(t_m) \\
&=& S_n(t_m) - \big( w_{22} - (M-M^{-1})w_{12} \big)  S_{n-1}(t_m).
\end{eqnarray*}

Since $S^2_{m}(z)+S^2_{m-1}(z)-zS_m(z)S_{m-1}(z)=1$ we have
\begin{eqnarray*}
&&w_{22}+(M-M^{-1})w_{12}\\
 &=& (1+M^2+M^{-2}-z) S^2_{m}(z) - (M^2+M^{-2})S_m(z)S_{m-1}(z) + S^2_{m-1}(z)\\
&=&1  - (z-M^2-M^{-2})S_{m}(z) \big( S_m(z)- S_{m-1}(z) \big).
\end{eqnarray*}
The formula for $\Phi_K(M,z)$ then follows.
\end{proof}

\subsubsection{The case $k=2m$} In this case we have $w = (ba^{-1})^m(b^{-1}a)^m$.

\begin{proposition} \label{wbar}
We have $\rho(w)=\left[ \begin{array}{cc}
w_{11} & w_{12} \\
(2-z)w_{12} & w_{22} \end{array} \right]$ where
\begin{eqnarray*}
w_{11} &=& S^2_{m}(z) + (2-2z)S_m(z)S_{m-1}(z) + (1+2M^2-2z-M^2z+z^2)S^2_{m-1}(z),\\
w_{12} &=& (M^{-1}-M)S_m(z)S_{m-1}(z) + (M^{-1}+M-M^{-1}z)S^2_{m-1}(z),\\
w_{22} &=& S^2_{m}(z) - 2S_m(z)S_{m-1}(z) + (1+2M^{-2}-M^{-2}z)S^2_{m-1}(z).
\end{eqnarray*}
\end{proposition}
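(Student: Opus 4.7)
The plan is to parallel the proof of Proposition \ref{w} but without the middle $ba$ factor. In the proof of Proposition \ref{w} the author already established (from $\rho(ba^{-1})$ and Lemma \ref{formula}) the explicit formulas
$$\rho((ba^{-1})^m)=\left[ \begin{array}{cc} S_m(z)-(z-1)S_{m-1}(z) & -MS_{m-1}(z) \\ M^{-1}(2-z)S_{m-1}(z) & S_m(z)-S_{m-1}(z) \end{array} \right],$$
$$\rho((b^{-1}a)^m)=\left[ \begin{array}{cc} S_m(z)-(z-1)S_{m-1}(z) & M^{-1}S_{m-1}(z) \\ M(z-2)S_{m-1}(z) & S_m(z)-S_{m-1}(z) \end{array} \right].$$
Since $w=(ba^{-1})^m(b^{-1}a)^m$ in the case $k=2m$, the entries of $\rho(w)$ are obtained by directly multiplying these two matrices. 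This bypasses the need for the middle $\rho(ba)$ block that appeared in the odd case.

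For the actual computation I would abbreviate $S:=S_m(z)$, $s:=S_{m-1}(z)$, $A:=S-(z-1)s$, $B:=S-s$, and read off the four matrix entries of the product. The $(1,1)$ entry is $A^2-M^2(z-2)s^2$; expanding $A^2=S^2-2(z-1)Ss+(z-1)^2s^2$ and collecting the $s^2$ coefficient as $(z-1)^2-M^2(z-2)=1+2M^2-2z-M^2z+z^2$ gives the stated $w_{11}$. The $(2,2)$ entry is $B^2+M^{-2}(2-z)s^2$; expanding $B^2=S^2-2Ss+s^2$ yields $w_{22}$ immediately. The off-diagonal entry is
$$w_{12}=M^{-1}As-MBs=(M^{-1}-M)Ss+\bigl(M+M^{-1}-M^{-1}z\bigr)s^2,$$
which matches the claimed formula.

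As a sanity check, one should verify that the $(2,1)$ entry equals $(2-z)w_{12}$; this is guaranteed a priori by Lemma \ref{2112} (since $\tilde{w}=w^{-1}$ for the double twist knot word and $r=2-z$ in \eqref{nonabelian}), but a direct calculation, $M^{-1}(2-z)sA+MB(z-2)s=(2-z)(M^{-1}A-MB)s=(2-z)w_{12}$, confirms it. Lemma \ref{chev} is not needed at this stage; it will enter only when one takes the trace $t_m=w_{11}+w_{22}$ in the next step to derive the Riley polynomial $\Phi_{J(2m,2n)}(M,z)$.

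The main obstacle is simply bookkeeping: keeping track of the various signs of $(z-2)$ versus $(2-z)$ and of $M$ versus $M^{-1}$ as one expands the $2\times 2$ product. There is no conceptual difficulty; the symmetry $(ba^{-1})^m \leftrightarrow (b^{-1}a)^m$ under the involution $M\leftrightarrow M^{-1}$ (combined with the orientation switch of the $s$-column) can be used as a check that $w_{11}$ and $w_{22}$ are swapped under this substitution, which they are.
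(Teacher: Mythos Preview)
Your argument is correct and is exactly the approach the paper implicitly intends: the paper omits the proof of Proposition~\ref{wbar} because it is parallel to that of Proposition~\ref{w}, and your computation---multiplying the two matrices $\rho((ba^{-1})^m)$ and $\rho((b^{-1}a)^m)$ already obtained there---carries this out cleanly. The entry-by-entry verification you give for $w_{11}$, $w_{12}$, $w_{22}$, and $w_{21}=(2-z)w_{12}$ is accurate.

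One small correction: your closing sanity check is not right as stated. The substitution $M\leftrightarrow M^{-1}$ does \emph{not} swap $w_{11}$ and $w_{22}$; for instance the $S_m S_{m-1}$ coefficient of $w_{11}$ is $2-2z$ while that of $w_{22}$ is $-2$, and neither depends on $M$. (What is true, and what the paper later uses, is that $M\leftrightarrow M^{-1}$ sends $w_{12}$ to $\widetilde{w}_{12}$, which enters the longitude formula.) This does not affect your proof, which stands on the direct matrix multiplication alone; just drop or rephrase that last remark.
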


\begin{proposition} \label{Rbar}
The Riley polynomial of $J(2m,2n)$ is 
$$\Phi_{J(2m,2n)}(M,z) = S_n(\bar{t}_m) - \Big( 1  + (z-M^2-M^{-2})S_{m-1}(z) \big( S_m(z)- S_{m-1}(z) \big) \Big) S_{n-1}(\bar{t}_m)$$
where $\bar{t}_m:= \tr \rho(w) = 2 + (z-2)(z-M^2-M^{-2})S^2_{m-1}(z)$.
\end{proposition}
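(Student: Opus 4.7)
The strategy is to mimic the proof of Proposition \ref{R} verbatim, but with the matrix entries of $\rho(w)$ supplied by Proposition \ref{wbar} instead of Proposition \ref{w}. Since Proposition \ref{wbar} already tells us that $\rho(w)$ has the form
$$\rho(w)=\left[\begin{array}{cc} w_{11} & w_{12} \\ (2-z)w_{12} & w_{22}\end{array}\right],$$
Lemma \ref{formula} immediately yields
$$\rho(w^n)=\left[\begin{array}{cc} S_n(\bar{t}_m)-w_{22}\,S_{n-1}(\bar{t}_m) & w_{12}\,S_{n-1}(\bar{t}_m) \\ (2-z)w_{12}\,S_{n-1}(\bar{t}_m) & S_n(\bar{t}_m)-w_{11}\,S_{n-1}(\bar{t}_m)\end{array}\right],$$
where $\bar{t}_m=\tr\rho(w)=w_{11}+w_{22}$. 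Substituting into the Riley equation $w^{(n)}_{11}-(M-M^{-1})w^{(n)}_{12}=0$ established in Section \ref{Riley}, we get
$$\Phi_{J(2m,2n)}(M,z)=S_n(\bar{t}_m)-\bigl(w_{22}+(M-M^{-1})w_{12}\bigr)S_{n-1}(\bar{t}_m).$$
So it remains to identify $w_{11}+w_{22}$ with the claimed $\bar{t}_m$, and $w_{22}+(M-M^{-1})w_{12}$ with the claimed $\bar{d}_m$.

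First I would compute the trace directly from Proposition \ref{wbar}:
$$w_{11}+w_{22}=2S_m^2(z)-2zS_m(z)S_{m-1}(z)+\bigl(2+2(M^2+M^{-2})-2z-(M^2+M^{-2})z+z^2\bigr)S_{m-1}^2(z).$$
Applying Lemma \ref{chev} in the form $S_m^2(z)-zS_m(z)S_{m-1}(z)=1-S_{m-1}^2(z)$ to the first two terms converts them to $2-2S_{m-1}^2(z)$, and the coefficient of $S_{m-1}^2(z)$ collapses to $(z-2)(z-M^2-M^{-2})$. This gives $\bar{t}_m=2+(z-2)(z-M^2-M^{-2})S_{m-1}^2(z)$, as required.

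Next I would expand $w_{22}+(M-M^{-1})w_{12}$ directly from Proposition \ref{wbar}, using $(M-M^{-1})^2=M^2+M^{-2}-2$ and $(M-M^{-1})(M^{-1}+M-M^{-1}z)=M^2-M^{-2}+(M^{-2}-1)z$. After collecting terms in $S_m^2(z)$, $S_m(z)S_{m-1}(z)$ and $S_{m-1}^2(z)$, the expression simplifies to
$$S_m^2(z)-(M^2+M^{-2})S_m(z)S_{m-1}(z)+(M^2+M^{-2}+1-z)S_{m-1}^2(z).$$
A second use of Lemma \ref{chev} (rewriting $S_m^2(z)=1-S_{m-1}^2(z)+zS_m(z)S_{m-1}(z)$) folds this into $1+(z-M^2-M^{-2})S_{m-1}(z)\bigl(S_m(z)-S_{m-1}(z)\bigr)=\bar{d}_m$, completing the proof.

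The conceptual content is light — everything is fixed once Proposition \ref{wbar} is in hand and Lemma \ref{formula} has been applied — so the only real obstacle is the bookkeeping in the two trigonometric simplifications via Lemma \ref{chev}. In particular, one must be careful with signs when expanding $(M-M^{-1})w_{12}$, since both monomials contain mixed powers of $M$ and of $z$, and it is easy to drop a term before the Chebyshev identity is applied.
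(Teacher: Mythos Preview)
Your proof is correct and follows exactly the approach the paper intends: the paper proves Proposition~\ref{R} in full and then states Proposition~\ref{Rbar} without proof, the implicit argument being the verbatim transcription of the proof of Proposition~\ref{R} with the entries from Proposition~\ref{wbar} in place of those from Proposition~\ref{w}. Your two simplifications via Lemma~\ref{chev} are carried out correctly, including the potentially error-prone expansion of $(M-M^{-1})w_{12}$.
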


\begin{remark}
Similar formulae for the Riley polynomial of $J(2m+1,2n)$ and $J(2m,2n)$ have already been obtained in \cite{MPL, MT}.
\end{remark}

\subsection{The canonical longitude} Recall that $X_K$ is the complement of the knot $K$. The
boundary of $X_K$ is a torus $\BT^2$. There is a standard choice of a meridian $\mu$ and a longitude
$\lambda$ on $\BT^2$ such that the linking number between the longitude and the knot is zero. We call
$\lambda$ the canonical longitude of $K$ corresponding to the meridian $\mu$. 

Let $\mu=a$ be the meridian of $K = J(k, 2n)$ and $\lambda$ the canonical longitude corresponding
to $\mu$.  Then we have $$\lambda=\begin{cases} 
\overleftarrow{w}^n w^n \mu^{-4n}, & \text{if~} k=2m+1,\\
\overleftarrow{w}^n w^n, & \text{if~} k=2m,
\end{cases}$$
where $\overleftarrow{w}$ is the word in the letters $a,b$ obtained by writing $w$ in the reversed order. With the representation $\rho$  in \eqref{nonabelian}, by \cite{HS} we have 
$\rho(\lambda) =\left[ \begin{array}{cc}
L & * \\
0 & L^{-1} \end{array} \right]$, where $L=-\widetilde{w}_{12}/w_{12}$. Here $\widetilde{w}_{ij}$ is obtained from $w_{ij}$ by replacing $M$ by $M^{-1}$. 

From Propositions \ref{w} and \ref{wbar} we have the following.

\begin{proposition} \label{L}
(i) If $k=2m+1$ then 
$$L = - M^{-4n} \, \frac{M^{-1} S_m(z) - M S_{m-1}(z) }{ M S_m(z) - M^{-1}S_{m-1}(z)  }.$$

(ii) If $k=2m$ then 
$$L = - \frac{M^{-1}(S_m(z) - S_{m-1}(z)) - M(S_{m-1}(z) - S_{m-2}(z))}{M(S_m(z) - S_{m-1}(z)) - M^{-1} (S_{m-1}(z) - S_{m-2}(z))}.$$
\end{proposition}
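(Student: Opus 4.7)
The plan is to directly invoke the identity $L=-\widetilde{w}_{12}/w_{12}$ recorded immediately above the proposition (from \cite{HS}) and substitute the explicit formulas for $w_{12}$ furnished by Propositions \ref{w} and \ref{wbar}. For the odd case, $\lambda=\overleftarrow{w}^n w^n\mu^{-4n}$, so the contribution $\rho(\mu^{-4n})$, which is upper triangular with top-left entry $M^{-4n}$, produces the extra factor $M^{-4n}$ in front of $-\widetilde{w}_{12}/w_{12}$. In both parts, the strategy is to expose a common factor shared by $w_{12}$ and $\widetilde{w}_{12}$ that cancels in the ratio, leaving the stated compact form for $L$.

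Part (i) is immediate: Proposition \ref{w} already presents $w_{12}$ in the factored form $(S_m(z)-S_{m-1}(z))(MS_m(z)-M^{-1}S_{m-1}(z))$. Replacing $M$ by $M^{-1}$ gives $\widetilde{w}_{12}=(S_m(z)-S_{m-1}(z))(M^{-1}S_m(z)-MS_{m-1}(z))$, so the factor $S_m(z)-S_{m-1}(z)$ cancels in $\widetilde{w}_{12}/w_{12}$, and multiplying by $-M^{-4n}$ yields the claimed expression.

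Part (ii) requires a small maneuver. The formula for $w_{12}$ in Proposition \ref{wbar} is not visibly factored. I would apply the Chebyshev recursion $S_{m-2}(z)=zS_{m-1}(z)-S_m(z)$ to rewrite the coefficient $M^{-1}+M-M^{-1}z$ appearing in $w_{12}$ and regroup to obtain
\begin{equation*}
w_{12}=-S_{m-1}(z)\bigl[M(S_m(z)-S_{m-1}(z))-M^{-1}(S_{m-1}(z)-S_{m-2}(z))\bigr].
\end{equation*}
By the $M\mapsto M^{-1}$ symmetry,
\begin{equation*}
\widetilde{w}_{12}=-S_{m-1}(z)\bigl[M^{-1}(S_m(z)-S_{m-1}(z))-M(S_{m-1}(z)-S_{m-2}(z))\bigr],
\end{equation*}
so the common factor $-S_{m-1}(z)$ cancels in $-\widetilde{w}_{12}/w_{12}$ and the stated formula in (ii) emerges.

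The main obstacle is recognizing the hidden factor $S_{m-1}(z)$ in the even case; once the Chebyshev identity $S_{m-2}=zS_{m-1}-S_m$ is invoked at the right moment, everything reduces to routine algebra together with the $M\mapsto M^{-1}$ symmetry, and no further input beyond Propositions \ref{w}, \ref{wbar}, and the longitude identity from \cite{HS} is required.
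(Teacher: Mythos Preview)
Your proposal is correct and follows exactly the route the paper indicates: the paper's entire argument is the single line ``From Propositions \ref{w} and \ref{wbar} we have the following,'' and you have supplied precisely the details that line entails, including the Chebyshev substitution $S_{m-2}=zS_{m-1}-S_m$ needed to factor $w_{12}$ in the even case and the observation that the $\mu^{-4n}$ term contributes the extra $M^{-4n}$ in the odd case.
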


\subsection{Proof of Theorems \ref{odd} and \ref{even}} We begin with a simple lemma.

\begin{lemma} \label{LL}
Suppose $z_1, z_2 \in \BC$ and $\omega \in \BR$. Then 
$$|z_1 - e^{i\omega} z_2|^2 = |z_1|^2+|z_2|^2-2 \, \emph{Re}( z_1 \overline{z_2}) \cos\omega - 2 \,\emph{Im}( z_1 \overline{z_2}) \sin\omega.$$
Hence $|z_1 - e^{i\omega} z_2| \ge |z_1 - e^{-i\omega} z_2|$ if and only if $\emph{Im}( z_1 \overline{z_2}) \sin\omega \le 0$. Moreover,  $|z_1 - e^{i\omega} z_2| = |z_1 - e^{-i\omega} z_2|$ if and only if $\emph{Im}( z_1 \overline{z_2}) \sin\omega = 0$.
\end{lemma}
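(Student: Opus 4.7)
The statement is a short calculation. My plan is to first establish the main identity by direct expansion, then derive both comparison claims from it by a one-line subtraction.

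For the first claim, I would write $|z_1 - e^{i\omega}z_2|^2 = (z_1 - e^{i\omega}z_2)(\overline{z_1} - e^{-i\omega}\overline{z_2})$ and expand:
\begin{equation*}
|z_1 - e^{i\omega}z_2|^2 = |z_1|^2 + |z_2|^2 - e^{-i\omega}(z_1\overline{z_2}) - e^{i\omega}\overline{(z_1\overline{z_2})}.
\end{equation*}
The last two terms equal $2\,\mathrm{Re}\bigl(e^{-i\omega} z_1\overline{z_2}\bigr)$. Setting $w = z_1\overline{z_2}$ and using $e^{-i\omega} = \cos\omega - i\sin\omega$, a routine expansion gives
\begin{equation*}
2\,\mathrm{Re}\bigl((\cos\omega - i\sin\omega)\,w\bigr) = 2\cos\omega\,\mathrm{Re}(w) + 2\sin\omega\,\mathrm{Im}(w),
\end{equation*}
which upon substitution yields the claimed identity.

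For the comparison, I would apply the same identity with $\omega$ replaced by $-\omega$; since $\cos(-\omega)=\cos\omega$ and $\sin(-\omega) = -\sin\omega$, subtracting the two expressions gives
\begin{equation*}
|z_1 - e^{i\omega} z_2|^2 - |z_1 - e^{-i\omega} z_2|^2 = -4\sin\omega\,\mathrm{Im}(z_1\overline{z_2}).
\end{equation*}
Hence the inequality $|z_1 - e^{i\omega}z_2| \ge |z_1 - e^{-i\omega}z_2|$ is equivalent to $\sin\omega\,\mathrm{Im}(z_1\overline{z_2}) \le 0$, and equality holds if and only if $\sin\omega\,\mathrm{Im}(z_1\overline{z_2})=0$.

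There is no real obstacle here; the only thing to be careful about is keeping the signs straight when combining $e^{\pm i\omega}$ with the complex conjugate of $z_1\overline{z_2}$. The whole proof is at most a few lines of elementary manipulation.
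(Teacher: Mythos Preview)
Your argument is correct and is exactly the natural direct computation one would expect; the paper in fact states this lemma without proof, calling it simple, so there is nothing to compare against beyond noting that your expansion and the subtraction step are precisely what the author has in mind.
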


For a hyperbolic two-bridge knot $K$, by \cite{HLM, Ko, Po, PW} there exists an angle $\alpha_K \in [\frac{2\pi}{3},\pi)$ such that $X_K(\alpha)$ is hyperbolic for $\alpha \in (0, \alpha_K)$, Euclidean for $\alpha = \alpha_K$, and spherical for $\alpha \in (\alpha_K, \pi]$. 

We first consider the case $K=J(2m+1,2n)$. For $\alpha \in (0, \alpha_K)$, by the Schlafli formula the volume of a hyperbolic cone-manifold $X_K(\alpha)$ is given by
$$\text{Vol}(X_K(\alpha)) = \int_{\alpha}^{\alpha_K} \frac{\ell_\omega}{2}d\omega,$$
where $\ell_{\omega}$ is the real length of the longitude of the cone-manifold $X_K(\omega)$. See e.g. \cite{HMP}.

For each $\alpha \le \omega \le \alpha_K$, $\ell_\omega$ is calculated as follows. Suppose $\rho: \pi_1(X_K) \to SL_2(\BC)$ is a nonabelian representation of the form \eqref{nonabelian}, where $M=e^{i\omega/2}$ and $z$ is a zero of the Riley polynomial $\Phi_K(M,z)$. The complex length of a canonical longitude $\lambda$ of $K$ is the complex number $\gamma_{\omega}$ module $2\pi\BZ$ satisfying
$$\tr \rho(\lambda) = 2\cosh \frac{\gamma_\omega}{2}.$$
Then $\ell_{\omega} = |\text{Re}(\gamma_{\omega})|$. By Proposition \ref{L} we have $\rho(\lambda) = \left[ \begin{array}{cc}
L & * \\
0 & L^{-1} \end{array} \right]$ where $$L = - M^{-4n} \, \frac{M^{-1} S_m(z) - M S_{m-1}(z) }{ M S_m(z) - M^{-1}S_{m-1}(z)}.$$ For the volume, we can either choose $|L| \ge 1$ or $|L| \le 1$. We choose $L$ with $|L| \ge 1$. Since $M=e^{i\frac{\omega}{2}}$, by Lemma \ref{LL} we have $\text{Im}( S_m(z) \overline{S_{m-1}(z)} ) \le 0$. Moreover $|L|=1$ if and only if $\text{Im}( S_m(z) \overline{S_{m-1}(z)} ) = 0$.

Since $\ell_\omega = |\text{Re}(\gamma_{\omega})| = 2 \log |L|$ we have 
$$
\text{Vol}(X_{J(2m+1,2n)}(\alpha)) = \int_{\alpha}^{\alpha_K} \frac{\ell_\omega}{2}d\omega 
= \int_{\alpha}^{\alpha_K} \log |L| d\omega.
$$
For $\alpha_K < \alpha \le \pi$, by \cite[Prop.6.4]{PW} all the characters are real. In particular $z=\tr \rho(ab^{-1}) \in \BR$, and hence $|L|=1$, for $\alpha_K < \alpha \le \pi$. Hence 
$$
\text{Vol}(X_{J(2m+1,2n)}(\alpha)) = 
 \int_{\alpha}^{\pi} \log |L| d\omega = \int_{\alpha}^{\pi} \log \left| \frac{S_m(z) - M^2 S_{m-1}(z) }{ M^2 S_m(z) - S_{m-1}(z)  } \right|  d\omega.
$$
This completes the proof of Theorem \ref{odd}.  The proof of Theorem \ref{even} is similar. In that case we apply Propositions \ref{wbar}, \ref{Rbar} and the fact that $$|L|=\left| \frac{(S_m(z) - S_{m-1}(z)) - M^{2} (S_{m-1}(z) - S_{m-2}(z))}{M^{2}(S_m(z) - S_{m-1}(z)) - (S_{m-1}(z) - S_{m-2}(z))} \right| \ge 1$$
if and only if $\text{Im} \Big( \big(S_m(z) - S_{m-1}(z)  \big) \overline{S_{m-1}(z) - S_{m-2}(z)} \Big) \le 0$.

\end{document}